\tikzstyle{startstop} = [rectangle, rounded corners, minimum width=3cm, minimum height=1cm, text centered, draw=black, fill=red!30]
\tikzstyle{process} = [rectangle, minimum width=3.5cm, minimum height=1cm, text centered, draw=black, fill=orange!30]
\tikzstyle{decision} = [diamond, minimum width=3cm, minimum height=1cm, text centered, draw=black, fill=green!30]
\tikzstyle{arrow} = [thick,->,>=stealth]
\tikzstyle{data} = [rectangle, minimum width=3.5cm, minimum height=1cm, text centered, draw=black, fill=blue!30]
\crefname{hypothesis}{Hypothesis}{Hypotheses}
\title{Numerical analysis of the parallel orbital-updating approach for eigenvalue problems\thanks{Submitted to the editors DATE.
\funding{This work was supported by the National Key R \& D Program of China under grants 2019YFA0709600 and 2019YFA0709601, the National Natural Science Foundation of China under grant 12021001 and 92270206. B. Yang also acknowledges the support from the Fundamental Research Funds for the Central Universities and the disciplinary funding of Central University of Finance and Economics.}}}
\author{Xiaoying Dai\thanks{SKLMS, Academy of Mathematics and Systems Science, Chinese Academy of Sciences, Beijing 100190, China; and School of Mathematical Sciences, University of Chinese Academy of Sciences, Beijing 100049, China (\email{daixy@lsec.cc.ac.cn}, \email{liyan2021@lsec.cc.ac.cn}, \email{azhou@lsec.cc.ac.cn}).} \and
	Yan Li\footnotemark[2]
\and Bin Yang\thanks{School of Statistics and Mathematics, Central University of Finance and Economics, Beijing 102206, China (\email{binyang@lsec.cc.ac.cn}).}
\and Aihui Zhou\footnotemark[2]}
\newcommand{\inner}[1]{\left\langle #1 \right\rangle}
\begin{document}

\maketitle

\begin{abstract}
The parallel orbital-updating approach is an orbital/eigenfunction iteration based approach for solving eigenvalue problems when many eigenpairs are required. It has been proven to be efficient, for instance, in electronic structure calculations. In this paper, based on the investigation of a quasi-orthogonality, we present the numerical analysis of the parallel orbital-updating approach for linear eigenvalue problems, including convergence and error estimates of the numerical approximations.
\end{abstract}

\begin{keywords}
parallel orbital-updating, eigenvalue problem, convergence, quasi-orthogonality
\end{keywords}

\begin{MSCcodes}
65F10, 65J05, 65N25, 65N30
\end{MSCcodes}

\section{Introduction}Eigenvalue problems are typical models in scientific and engineering computing.  For instance, Hartree–Fock type and Kohn-Sham equations are widely used mathematical models in electronic structure calculations. The eigenvalues and their corresponding eigenfunctions of these equations provide detailed information about the properties of atoms, molecules, and solids, helping to predict chemical reactions, material properties, and physical behaviors (see e.g. \cite{dai2011finite, kaxiras2003atomic, kohn1965self, martin2020electronic}). 

In electronic structure calculations of a large system, the approximations of many eigenpairs are required. With discretization and the self-consistent field iteration \cite{kresse1996efficient, martin2020electronic, payne1992iterative}, solving the Hartree–Fock type equations or the Kohn-Sham equations is then transformed into repeatedly solving some large scale algebraic eigenvalue problems. It is known that the computational cost of solving such large scale eigenvalue problems is high. In particular, the solving process often requires large scale orthogonalizing operations, which demand global summation operations and limit large scale parallelization.  Nowadays, the computational scale is limited for systems with hundreds to thousands of atoms. Since applications demand and supercomputers are available, it is significant to develop scalable and parallelizable numerical methods to solve such eigenvalue problems.

To reduce the computational cost and improve the parallel scalability, a so-called parallel orbital-updating (ParO) approach has been proposed in \cite{dai2014parallel} and developed in \cite{dai2021parallel, oliveira2020cecam, pan2017parallel} for solving eigenvalue problems and their equivalent models resulting from electronic structure calculations. We mention that there are also some other methods for approximating eigenpairs in the literature such as the density matrix based algorithm \cite{polizzi2009density}, the subspace iteration algorithm  \cite{saad2016analysis}, and the projection method based on the root-finding of the analytic function \cite{sakurai2003projection}. With ParO, we avoid solving the large scale eigenvalue problem directly and instead solve some independent large scale source problems and small scale projected eigenvalue problems to obtain approximate eigenpairs.  Moreover, we see from the numerical experiments in \cite{dai2014parallel, pan2017parallel} that the stiff matrix corresponding to the small scale eigenvalue problem is almost diagonal, 
which may further reduce the computation cost. Because of their independence, these source problems can be solved intrinsically in parallel. For each source problem, the standard parallel strategies can be applied. It then allows a two-level parallelization: one level of parallelization is obtained by partitioning these source problems into different groups of processors, and another level of parallelization is obtained by assigning each source problem to several processors contained in each group. This two-level parallelization demonstrates that ParO has great potential for large scale calculations. The numerical experiments in \cite{dai2014parallel, dai2021parallel, pan2017parallel} show that ParO is efficient, of good scalability of parallelization, and  can produce highly accurate approximations. We conclude that ParO is a powerful parallel computing approach for solving eigenvalue problems, in which many eigenpairs are required, and has been integrated into the electronic structure calculation software Quantum ESPRESSO \cite{quantum}. However, up to now, there is no mathematical justification for ParO.

The purpose of this paper is to present the numerical analysis of ParO for linear eigenvalue problems. We observe that during the implementation process of ParO, we are able to obtain approximately orthogonal eigenfunctions, which we call quasi-orthogonal eigenfunctions. ParO can be viewed as utilizing the quasi-orthogonal approximations, for which the computational cost is lower, to obtain orthogonal approximations. Our numerical analysis starts from the introduction and investigation of a quasi-orthogonality, which plays a crucial role in our
investigations on approximations of eigenvalue problems. We understand that the presence of both single eigenvalues and multiple eigenvalues renders traditional methods for analyzing single eigenvalues no longer applicable. The difficulty for the case of multiple eigenvalues lies in the fact that the traditional measure for the eigenfunction errors is not valid anymore, because the approximate eigenfunctions obtained in iterations may not approximate the same eigenfunction. Instead of focusing on particular eigenfunctions, in our analysis, we employ the eigenspaces and the gap from the eigenspaces to their approximations, which brings additional analyzing complexities and requires sophisticated functional analysis.

Some approaches for constructing source problems in ParO have been proposed in \cite{dai2014parallel}. As a practical example, the shifted-inverse based ParO algorithm applies the shifted-inverse approach to construct some source problems and solves a small scale eigenvalue problem in each iteration to update the shifts to speed up the convergence \cite{dai2014parallel, pan2017parallel}. To analyze the convergence of the algorithm, we first study its simplified version, which fixes the shifts and does not carry out the steps of solving small scale eigenvalue problems in iterations. Within the framework of ParO, we demonstrate the convergence of numerical solutions produced by the simplified algorithm, which does not require sufficiently accurate initial guesses. Based on the numerical analysis of the simplified version, we then present a more general and informative convergence result of the shifted-inverse based ParO algorithm than the classical results of the shifted-inverse approach for simple eigenvalues mentioned in, e.g., \cite{arbenz2012lecture,parlett1998symmetric}.  To improve the numerical stability, a modified version is proposed in \cite{pan2017parallel}, which augments the projected subspace by using the residuals. We also provide a brief outline of the proof for the convergence of this modified algorithm. 

The rest of this paper is organized as follows. We recall some existing results of a model problem and introduce the relevant notation in Section 2, and provide some elementary analysis for the quasi-orthogonality in Section 3. In Section 4, we show the convergence and error estimates of the numerical approximations produced by ParO and its practical versions. We give some concluding remarks in Section 5. Finally, we present the corresponding detailed proofs in Appendix A.

\section{Preliminaries}In this section, we recall some existing results for an eigenvalue problem (including its finite dimensional approximations) that will be used.

\subsubsection*{Eigenvalue problem} Suppose  $H$ is a real separable Hilbert space with inner product $\inner{\cdot, \cdot}$ and norm $\Vert \cdot\Vert=\sqrt{\inner{\cdot,\cdot}}$. Consider an eigenvalue problem: find $\lambda\in\mathbb{R}$ and $0\neq u\in H$ such that
\begin{align}\label{eq:weak_form_leq}
    a(u,v)=\lambda b(u,v),\quad \forall v\in H,
\end{align}
where $a(\cdot,\cdot)$ and $b(\cdot,\cdot)$ are two symmetric bilinear forms over $H\times H$. We assume that
\begin{align*}
    a(v,w)\leqslant C_{a}\Vert v\Vert\Vert w\Vert, \quad \forall v,w\in H,
\end{align*}
and 
\begin{align*}
    a(v,v)\geqslant c_{a}\Vert v\Vert^{2},\quad v\in H,
\end{align*}
with constants $C_{a}, c_{a}>0$. It follows that $a(\cdot, \cdot)$ is an inner product and the induced norm $\Vert v\Vert_{a}=\sqrt{a(v,v)}$ is equivalent to $\Vert\cdot\Vert$ on $H$. We assume that $b(\cdot,\cdot)$ is another inner product of $H$ and $\Vert\cdot\Vert_{b}\equiv \sqrt{b(\cdot,\cdot)}$ is compact with respect to $\Vert\cdot\Vert$. For convenience, we shall denote $\Vert\cdot\Vert_{a}$ as $\Vert\cdot\Vert$ in this paper.

It is known that \cref{eq:weak_form_leq} has a countable sequence of real eigenvalues $0<\lambda_{1}<\lambda_{2}<\cdots$ and $\lambda_{i}$ has the multiplicity $d_{i} (i=1,2,\ldots)$. The indices of $\lambda_{i}$ are $(i,1),\ldots,(i,d_{i})$, that is 
\begin{align*}
    \lambda_{i-1}<\lambda_{i}=\lambda_{i1}=\cdots=\lambda_{id_{i}}<\lambda_{i+1}, \quad i=1,2,\ldots,
\end{align*}
with $\lambda_{0}=0, d_{0}=0$. 

For $1\leqslant j\leqslant d_{i}$ and $1\leqslant s\leqslant d_{r}$, denote $(i,j)<(r,s)$ if $i<r$ or $i=r, j<s$. Let $M(\lambda_{i})$ be the eigenspace corresponding to $\lambda_{i}$ and $\{u_{ij}\}_{j=1}^{d_{i}}$ be the orthonormal basis of $M(\lambda_{i})$, that is, $M(\lambda_{i})=\operatorname{span}\{u_{i1}, \ldots, u_{id_{i}}\}$ for $i=1,2,\ldots$ with $b(u_{ij}, u_{kl})=\delta_{ik}\delta_{jl}$, where $\delta_{ik}$ and $\delta_{jl}$ are the Kronecker delta.

We consider to obtain the smallest $N$ clustered eigenvalues of \cref{eq:weak_form_leq} and their corresponding eigenfunctions, and assume that there exists $q\in\mathbb{N}_{+}$ such that $ \sum_{i=1}^{q}d_{i}=N.$

\subsubsection*{An example} A typical example of \cref{eq:weak_form_leq} is an eigenvalue problem of a partial differential operator over a bounded domain. Let $\Omega\subset\mathbb{R}^{d}(d\geqslant1)$ be a bounded domain. We shall use the standard notation for Sobolev spaces $H^{1}(\Omega)$ with associated norms (see, e.g.,  \cite{adams2003sobolev}). Let $H=H_{0}^{1}(\Omega)=\{v\in H^{1}(\Omega):v|_{\partial\Omega}=0\}$ and $(\cdot, \cdot)$ be the standard $L^{2}$ inner product. Consider the eigenvalue problem: find $\lambda\in\mathbb{R}$ and $u\in H_{0}^{1}(\Omega)$ with $\Vert u\Vert_{L^2(\Omega)}=1$ such that
\begin{align*}
   -\nabla\cdot(A\nabla u)+cu=\lambda u,
\end{align*}
where $A:\Omega\rightarrow\mathbb{R}^{d\times d}$ is piecewise Lipschitz and symmetric positive definite, and $0\leqslant c\in L^{\infty}(\Omega)$. Its associate weak form reads that: find $\lambda\in\mathbb{R}$ and $0\neq u\in H_{0}^{1}(\Omega)$ such that
\begin{align*}
    a(u,v)=\lambda b(u,v),\quad \forall v\in H_{0}^{1}(\Omega),
\end{align*}
where 
\begin{align*}
    a(u,v)=(A\nabla u, \nabla v)+(cu,v),\quad b(u,v)=(u,v).
\end{align*}
We see that $a(\cdot, \cdot)$ and $b(\cdot, \cdot)$ satisfy the assumptions above. 
\begin{remark}
We mention that the results obtained in this paper are also valid for a more general bilinear form $a(\cdot, \cdot)$ with
\begin{align*}
    \|w\|^2_{H^1_0(\Omega)}-C_{1}^{-1}\|w\|_{L^2(\Omega)}\leqslant C_{2}a(w,w),\quad\forall w\in H^1_0(\Omega)
\end{align*}
holding for some constant $C_{1}, C_{2}>0$ (see,  e.g., Remark 2.9 in \cite{dai2008convergence}).
\end{remark}

\subsubsection*{Distance from one subspace to another} To carry out the analysis, we apply the following distance from the nontrivial subspace $U\subset H$ to the subspace $V\subset H$ (\cite{chatelin2011spectral, kato2013perturbation, knyazev2006new}):
    \begin{align*}
        \operatorname{dist}(U, V):=\sup_{u\in U, \Vert u\Vert=1}\inf_{v\in V}\Vert u-v\Vert.
    \end{align*}
Consistently, for any $u,v\in H$, we define
\begin{align*}
    \operatorname{dist}(u,v):=\operatorname{dist}\left(\operatorname{span}\{u\}, \operatorname{span}\{v\}\right).
\end{align*}
We see that $\operatorname{dist}(u,v)$ is actually the sine of the angle between $u$ and $v$, and is independent of the norms of vectors. Note that $\operatorname{dist}(U, V)=1$ when $\operatorname{dim}(U)>\operatorname{dim}(V)$.
We also observe that for $U, V\subset H$ with $\operatorname{dim}(U)=\operatorname{dim}(V)<\infty$, there holds that
     \begin{align}\label{ine:dist_exchange}
         \operatorname{dist}(U, V)=\operatorname{dist}(V,U).
     \end{align}

Define $\mathcal{P}_{V}$ to be the orthogonal projection from $H$ onto $V\subset H$ with respect to the inner product $a(\cdot,\cdot)$. 

\subsubsection*{Finite dimensional approximation} Let $V^{h}$ be a finite dimensional subspace of $H$ with $\operatorname{dim}(V^{h})=N_{g}\geqslant N$. The standard finite dimensional discretization of \cref{eq:weak_form_leq} is defined as follows: find $\lambda^{h}\in\mathbb{R}$ and $0\neq u^{h}\in V^{h}$ such that
\begin{align}\label{eq:fd_weak_form_leq}
    a(u^{h},v)=\lambda^{h} b(u^{h},v),\quad \forall v\in V^{h}.
\end{align}

 For convenience, we assume that there exists $p\geqslant q$ such that $N_{g}=\sum_{i=1}^{p}d_{i}$. Then we may order the eigenvalues of \cref{eq:fd_weak_form_leq} as follows:
\begin{align*}
    0<\lambda^{h}_{11}\leqslant\cdots\leqslant\lambda^{h}_{1d_{1}}\leqslant\cdots\leqslant\lambda^{h}_{pd_{p}}.
\end{align*}
  The assumption is adopted to simplify the notation in our numerical analysis. In fact, our analysis results in this paper hold for all $N_{g}\geqslant N$, regardless of whether $N_{g}=\sum_{i=1}^{p}d_{i}$ is satisfied. Assume that the corresponding eigenfunctions $u^{h}_{ij}$ for $(i,j)\leqslant(p,d_{p})$ satisfy that $b(u^{h}_{ij}, u^{h}_{kl})=\delta_{ik}\delta_{jl}$. For $i=1,\ldots,p$, set $M_{h}(\lambda_{i})=\operatorname{span}\{u^{h}_{i1}, \ldots, u^{h}_{id_{i}}\}$.

We obtain from the minimum-maximum principle \cite{babuvska1989finite, chatelin2011spectral} that
\begin{align*}
    \lambda_{i}\leqslant\lambda^{h}_{i1}\leqslant\cdots\leqslant\lambda^{h}_{id_{i}},\quad i=1,2,\ldots,p.
\end{align*}

The following conclusion can be found in \cite{d2018optimization, knyazev1985sharp}.

\begin{proposition}\label{thm:cluster_eigen}
For the eigenvalue problem \cref{eq:weak_form_leq} and its finite dimensional discretization \cref{eq:fd_weak_form_leq},  there holds that
\begin{align*}
    0\leqslant\lambda^{h}_{ij}-\lambda_{i}\leqslant\lambda^{h}_{ij}\operatorname{dist}^{2}(\bigoplus_{i=1}^{q}M(\lambda_{i}), V^{h}),\quad \forall (1,1)\leqslant(i,j)\leqslant(q,d_{q}).
\end{align*}
\end{proposition}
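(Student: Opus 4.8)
The lower bound $0\leqslant\lambda^h_{ij}-\lambda_i$ is immediate from the minimum–maximum inequalities recorded just above, so the plan is to prove the upper estimate, which I would recast as $\lambda^h_{ij}\bigl(1-\operatorname{dist}^{2}(\bigoplus_{k=1}^{q}M(\lambda_k),V^h)\bigr)\leqslant\lambda_i$. Write $\delta:=\operatorname{dist}(\bigoplus_{k=1}^{q}M(\lambda_k),V^h)$; if $\delta=1$ there is nothing to prove since $\lambda_i>0$, so assume $\delta<1$. Let $\mathcal{E}:=\operatorname{span}\{u_{kl}:(1,1)\leqslant(k,l)\leqslant(i,j)\}$, so that $\dim\mathcal{E}$ equals the position of $\lambda^h_{ij}$ in the ordered spectrum of \cref{eq:fd_weak_form_leq}, and $\mathcal{E}\subseteq\bigoplus_{k=1}^{q}M(\lambda_k)$ because $(i,j)\leqslant(q,d_q)$; hence $\operatorname{dist}(\mathcal{E},V^h)\leqslant\delta<1$. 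Then $\mathcal{P}_{V^h}$ is injective on $\mathcal{E}$ (for unit $u\in\mathcal{E}$, $\Vert u-\mathcal{P}_{V^h}u\Vert\leqslant\delta<1=\Vert u\Vert$ forces $\mathcal{P}_{V^h}u\neq0$), so $W:=\mathcal{P}_{V^h}(\mathcal{E})\subseteq V^h$ satisfies $\dim W=\dim\mathcal{E}$, and the minimum–maximum principle for \cref{eq:fd_weak_form_leq} gives
\begin{align*}
\lambda^h_{ij}\leqslant\max_{0\neq w\in W}\frac{a(w,w)}{b(w,w)}.
\end{align*}

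It remains to bound this Rayleigh quotient, and I would split the work into two independent parts. Geometric part: for unit $u\in\mathcal{E}$ the infimum $\inf_{v\in V^h}\Vert u-v\Vert$ is attained at $\mathcal{P}_{V^h}u\in W$, so $\inf_{v\in W}\Vert u-v\Vert=\inf_{v\in V^h}\Vert u-v\Vert$; taking the supremum over such $u$ gives $\operatorname{dist}(\mathcal{E},W)=\operatorname{dist}(\mathcal{E},V^h)$, and since $\dim W=\dim\mathcal{E}$ the exchange identity \cref{ine:dist_exchange} yields $\operatorname{dist}(W,\mathcal{E})=\operatorname{dist}(\mathcal{E},V^h)\leqslant\delta$; hence, writing $\mathcal{P}_{\mathcal{E}}$ for the $a$-orthogonal projection onto $\mathcal{E}$, every $w\in W$ obeys $\Vert\mathcal{P}_{\mathcal{E}}w\Vert^{2}\geqslant(1-\delta^{2})\Vert w\Vert^{2}$. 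Algebraic part: in the $a$-orthogonal splitting $w=\mathcal{P}_{\mathcal{E}}w+w_{\perp}$, the eigen-relation $a(u_{kl},w_{\perp})=\lambda_{k}b(u_{kl},w_{\perp})$ shows that $w_{\perp}$, being $a$-orthogonal to every $u_{kl}$ spanning $\mathcal{E}$, is also $b$-orthogonal to $\mathcal{E}$, whence $b(w,w)=b(\mathcal{P}_{\mathcal{E}}w,\mathcal{P}_{\mathcal{E}}w)+b(w_{\perp},w_{\perp})\geqslant b(\mathcal{P}_{\mathcal{E}}w,\mathcal{P}_{\mathcal{E}}w)$; and since $\mathcal{P}_{\mathcal{E}}w$ is a combination of eigenfunctions with eigenvalues at most $\lambda_i$, $a(\mathcal{P}_{\mathcal{E}}w,\mathcal{P}_{\mathcal{E}}w)\leqslant\lambda_i\,b(\mathcal{P}_{\mathcal{E}}w,\mathcal{P}_{\mathcal{E}}w)$.

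Combining the two parts, $b(w,w)\geqslant\lambda_i^{-1}\Vert\mathcal{P}_{\mathcal{E}}w\Vert^{2}\geqslant\lambda_i^{-1}(1-\delta^{2})\Vert w\Vert^{2}=\lambda_i^{-1}(1-\delta^{2})\,a(w,w)$ for all $w\in W$, so $\max_{0\neq w\in W}a(w,w)/b(w,w)\leqslant\lambda_i/(1-\delta^{2})$; together with the displayed inequality this gives $\lambda^h_{ij}(1-\delta^{2})\leqslant\lambda_i$, the asserted estimate. I expect the crux to be the geometric identity $\operatorname{dist}(W,\mathcal{E})=\operatorname{dist}(\mathcal{E},V^h)$: it is exactly this choice of the trial space $W=\mathcal{P}_{V^h}(\mathcal{E})$, together with the exchange identity, that produces the sharp constant $1-\delta^{2}$, whereas the cruder bound $\inf_{v\in\mathcal{E}}\Vert\mathcal{P}_{V^h}u-v\Vert\leqslant\Vert\mathcal{P}_{V^h}u-u\Vert$ would only deliver the weaker factor $1/(1-\delta^{2})$ and does not suffice.
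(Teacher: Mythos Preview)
The paper does not supply its own proof of this proposition; it simply cites \cite{d2018optimization, knyazev1985sharp} as sources for the result. Your argument is correct and complete: the choice of trial space $W=\mathcal{P}_{V^h}(\mathcal{E})$ together with the exchange identity \cref{ine:dist_exchange} to obtain $\operatorname{dist}(W,\mathcal{E})=\operatorname{dist}(\mathcal{E},V^h)$ is exactly the mechanism that yields the sharp factor $1-\delta^2$, and it is in fact the strategy used in Knyazev's original paper, so your proof is essentially the standard one that the cited references contain.
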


Given the eigenvalue problem \cref{eq:weak_form_leq} and its finite dimensional approximation \cref{eq:fd_weak_form_leq}, the following result is classical and can be found in \cite{babuvska1989finite, chatelin2011spectral, knyazev1985sharp}. 
\begin{proposition}\label{thm:cluster_eigenfunc}
	Let $\{u_{ij}^{h}\}$ be the solutions of \cref{eq:fd_weak_form_leq}. There exists $\epsilon_{*}\in (0,1)$ satisfying that if $\operatorname{dist}\left(\bigoplus_{i=1}^{q}M(\lambda_{i}), V^{h}\right)\leqslant \epsilon_{*}$, there exists $\hat{u}_{ij}\in M(\lambda_{i})$ such that 
  \begin{align*}
      \left\Vert u^{h}_{ij}-\hat{u}_{ij}\right\Vert\leqslant C_{*}\operatorname{dist}(\bigoplus_{i=1}^{q}M(\lambda_{i}), V^{h}),\quad \forall (1,1)\leqslant(i,j)\leqslant(q,d_{q}),
  \end{align*}
  where $C_{*}$ is a constant that is independent of $V^{h}$.
\end{proposition}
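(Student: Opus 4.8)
The plan is to work with the solution operators associated with \cref{eq:weak_form_leq} and \cref{eq:fd_weak_form_leq}. Define $T\colon H\to H$ by $a(Tf,v)=b(f,v)$ for all $v\in H$, and $T_h\colon H\to V^h$ by $a(T_hf,v)=b(f,v)$ for all $v\in V^h$. Since $\|\cdot\|_b$ is compact with respect to $\|\cdot\|$, both are compact; $T$ is self-adjoint and positive with respect to $a(\cdot,\cdot)$, with eigenpairs $(\lambda_i^{-1},u_{ij})$ and eigenspaces $M(\lambda_i)$, while $T_h|_{V^h}$ is self-adjoint and positive on $(V^h,a(\cdot,\cdot))$ with eigenpairs $((\lambda_{ij}^h)^{-1},u_{ij}^h)$. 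Galerkin orthogonality gives $T_h=\mathcal{P}_{V^h}T$, so, writing $\delta:=\operatorname{dist}(\bigoplus_{i=1}^{q}M(\lambda_i),V^h)$ and using $Tw=\lambda_i^{-1}w$ for $w\in M(\lambda_i)$ together with the definition of $\operatorname{dist}$ and its monotonicity under subspace inclusion, one obtains the basic smallness estimate
\begin{align*}
\|(T-T_h)|_{M(\lambda_i)}\|=\|(I-\mathcal{P}_{V^h})T|_{M(\lambda_i)}\|\leqslant\lambda_i^{-1}\operatorname{dist}(M(\lambda_i),V^h)\leqslant\lambda_i^{-1}\delta,\qquad i=1,\ldots,q.
\end{align*}

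Next I would separate the spectral clusters. By \cref{thm:cluster_eigen}, $\lambda_{ij}^h\in[\lambda_i,\lambda_i/(1-\delta^2)]$ for $(1,1)\leqslant(i,j)\leqslant(q,d_q)$, hence $(\lambda_{ij}^h)^{-1}\in[(1-\delta^2)\lambda_i^{-1},\lambda_i^{-1}]$, whereas the min--max principle gives $\lambda_{q+1,1}^h\geqslant\lambda_{q+1}$, so every remaining eigenvalue of $T_h$ lies in $(0,\lambda_{q+1}^{-1}]$. Choosing $\epsilon_*\in(0,1)$ small enough that $\epsilon_*^2<1-\lambda_i/\lambda_{i+1}$ for all $i=1,\ldots,q$ (and shrinking $\epsilon_*$ further as needed below), the spectrum of $T_h$ then splits into $q$ groups that are mutually separated and separated from $0$ by gaps bounded below independently of $V^h$, the $i$-th group being exactly $\{(\lambda_{ij}^h)^{-1}\}_{j=1}^{d_i}$, whose span is $M_h(\lambda_i)$. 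For each $i$ fix a circle $\Gamma_i$ centered at $\lambda_i^{-1}$ of radius comparable to the spectral gap of $\lambda_i^{-1}$ for $T$ --- small enough to enclose no other eigenvalue of $T$, and, since $\delta\leqslant\epsilon_*$ is small, large enough to enclose all of $\{(\lambda_{ij}^h)^{-1}\}_{j=1}^{d_i}$ --- and let $E_i,E_{i,h}$ be the corresponding Riesz spectral projections of $T$ and $T_h$, so that $\operatorname{Range}(E_i)=M(\lambda_i)$ and $\operatorname{Range}(E_{i,h})=M_h(\lambda_i)$. From $E_iu=u$ for $u\in M(\lambda_i)$ and $E_{i,h}u\in M_h(\lambda_i)$ for $u\in H$, one gets $\operatorname{dist}(M(\lambda_i),M_h(\lambda_i))\leqslant\|(E_i-E_{i,h})E_i\|$; and from the resolvent identity
\begin{align*}
(E_i-E_{i,h})E_i=\frac{1}{2\pi\mathrm{i}}\oint_{\Gamma_i}(z-T_h)^{-1}(T-T_h)(z-T)^{-1}E_i\,\mathrm{d}z,
\end{align*}
noting that $(z-T)^{-1}E_i$ maps into $M(\lambda_i)$ with norm $|z-\lambda_i^{-1}|^{-1}$ on $\Gamma_i$ and invoking the estimate of the first paragraph, one obtains $\operatorname{dist}(M(\lambda_i),M_h(\lambda_i))\leqslant C_i\delta$ with $C_i=\lambda_i^{-1}\sup_{z\in\Gamma_i}\|(z-T_h)^{-1}\|$.

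Finally I would pass to individual eigenfunctions. By \cref{ine:dist_exchange}, $\operatorname{dist}(M_h(\lambda_i),M(\lambda_i))=\operatorname{dist}(M(\lambda_i),M_h(\lambda_i))\leqslant C_i\delta$, and since $\operatorname{span}\{u_{ij}^h\}\subset M_h(\lambda_i)$ and $\operatorname{dist}(\cdot,M(\lambda_i))$ is monotone in its first argument, $\operatorname{dist}(u_{ij}^h,M(\lambda_i))\leqslant C_i\delta$ for all $(1,1)\leqslant(i,j)\leqslant(q,d_q)$. Setting $\hat u_{ij}:=\mathcal{P}_{M(\lambda_i)}u_{ij}^h\in M(\lambda_i)$ and recalling $\|u_{ij}^h\|^2=a(u_{ij}^h,u_{ij}^h)=\lambda_{ij}^h\leqslant\lambda_q/(1-\epsilon_*^2)$, we conclude
\begin{align*}
\|u_{ij}^h-\hat u_{ij}\|=\|u_{ij}^h\|\,\operatorname{dist}(u_{ij}^h,M(\lambda_i))\leqslant C_*\,\delta=C_*\operatorname{dist}\Big(\bigoplus_{i=1}^{q}M(\lambda_i),V^h\Big)
\end{align*}
with $C_*:=\sqrt{\lambda_q/(1-\epsilon_*^2)}\,\max_{1\leqslant i\leqslant q}C_i$, which is independent of $V^h$. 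I expect the main obstacle to be the uniform resolvent bound $\sup_{z\in\Gamma_i}\|(z-T_h)^{-1}\|\leqslant C$ with $C$ independent of $V^h$: the naive perturbation argument off $T$ is unavailable because $\|T-T_h\|$ need not be small (we control $T-T_h$ only on $\bigoplus_{i=1}^{q}M(\lambda_i)$), so one must instead exploit the block-triangular structure $T_h=\mathcal{P}_{V^h}T$ on $H=V^h\oplus(V^h)^{\perp}$ together with the cluster separation above, which keeps $\operatorname{dist}(\Gamma_i,\sigma(T_h))$ bounded below uniformly. Alternatively, this gap estimate is available from the spectral approximation theory in \cite{babuvska1989finite,chatelin2011spectral,knyazev1985sharp}, from which the claim follows directly.
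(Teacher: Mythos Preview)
The paper does not actually prove this proposition: it is stated as a classical result with a citation to \cite{babuvska1989finite,chatelin2011spectral,knyazev1985sharp}, and the subsequent remark merely records explicit choices of $\epsilon_*$ and $C_*$ drawn from \cite{knyazev1985sharp}. So there is no ``paper's own proof'' to compare against beyond those references.

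Your argument is essentially the Babu\v{s}ka--Osborn/Chatelin spectral-projection proof, which is exactly one of the routes the cited literature takes, and it is correct. Two remarks. First, your flagged obstacle --- the uniform bound on $\|(z-T_h)^{-1}\|$ along $\Gamma_i$ --- is indeed resolvable by the block-triangular structure you mention: writing $T_h=\mathcal P_{V^h}T$ on $H=V^h\oplus(V^h)^{\perp}$ gives $(z-T_h)^{-1}$ explicitly in terms of $z^{-1}$, $(z-T_h|_{V^h})^{-1}$, and $\mathcal P_{V^h}T|_{(V^h)^{\perp}}$, and each piece is uniformly bounded once $\Gamma_i$ avoids the discrete spectral clusters and $0$. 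Alternatively, since the image of $(T-T_h)(z-T)^{-1}E_i$ lies in $(V^h)^{\perp}$, you can compute $(z-T_h)^{-1}$ on that block directly. Second, the approach underlying the explicit constants in the paper's remark (from \cite{knyazev1985sharp}) is somewhat different: it proceeds via min--max/variational characterizations rather than contour integrals and Riesz projections, which is what yields the sharp, computable $\epsilon_*$ and $C_*$ displayed there. Your resolvent argument gives the qualitative statement cleanly but with less explicit constants.
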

  \begin{remark}\label{rem:chosen}
  Based on Theorem 1 and Theorem 2 in \cite{knyazev1985sharp}, the constants in \cref{thm:cluster_eigenfunc} can be chosen as 
  	   \begin{align*}
  	       \epsilon_{*}=\alpha \min_{i=1,\ldots,q+1}\sqrt{\frac{\lambda_{i}-\lambda_{i-1}}{\lambda_{i}}} \quad\text{and}\quad C_{*}=\max_{i=1,\ldots,q}	\sqrt{\frac{2(\lambda_{i+1}-\lambda_{i-1})\lambda_{i}}{(1-\alpha^{2})(\lambda_{i}-\lambda_{i-1})(\lambda_{i+1}-\lambda_{i})}}
  	   \end{align*}
  	 with $\alpha\in(0,1)$.
  	\end{remark}

 \section{Quasi-orthogonality}
To understand the philosophy behind ParO and carry out the numerical analysis, we introduce a quasi-orthogonality, which plays a crucial role in our
investigations on approximations of eigenvalue problems.

\begin{definition}
Given $\delta>0$ and $n\geqslant2$, $\{v_j\}^n_{j=1}\subset H$ with $\Vert v_{j}\Vert=1$ for $j=1,\ldots,n$ is said to be $\delta$-quasi-orthogonal if there exists $\{u_j\}^n_{j=1}\subset H$ satisfying that 
\begin{align}\label{ine:orth_distan}
          a(u_{i}, u_{j})=\delta_{ij}, ,  \quad 
          \Vert u_{j}-v_{j}\Vert \leqslant\delta,\quad i,j=1,2,\ldots,n.
      \end{align}
\end{definition}

 The following proposition tells the approximation property of the orthogonalization of quasi-orthogonal vectors. The proof is provided in \cref{sec:p_app_pro_orth}.
 \begin{proposition}\label{prop:app_pro_orth}
     If  $\{v_{j}\}_{j=1}^{n}\subset H$ is $\delta$-quasi-orthogonal, then  there exists an orthonormal basis  $\{\tilde{v}_{j}\}_{j=1}^{n}$ of $\operatorname{span}\{v_{1},\ldots,v_{n}\}$ such that 
         \begin{align*}
          \left\Vert\tilde{v}_{j}-v_{j}\right\Vert\leqslant\sqrt{n}\delta,\quad j=1,\ldots,n.
      \end{align*}
 \end{proposition}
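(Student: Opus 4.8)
The plan is to orthonormalize $\{v_j\}_{j=1}^n$ by the symmetric (L\"owdin) transformation determined by its Gram matrix and then to use the fact that, among \emph{all} $a$-orthonormal $n$-tuples of $H$, this particular orthonormalization minimizes $\sum_{j=1}^n\|\tilde v_j-v_j\|^2$; this is what allows the family $\{u_j\}$ supplied by the definition --- which in general does not lie in $\operatorname{span}\{v_1,\dots,v_n\}$ --- to serve as a competitor. First I would observe that the assertion requires $\dim\operatorname{span}\{v_1,\dots,v_n\}=n$, which holds in the regime of interest $\sqrt n\,\delta<1$: if $\sum_{j}c_jv_j=0$ with $c\neq0$, then $\|c\|_2=\|\sum_{j}c_ju_j\|=\|\sum_{j}c_j(u_j-v_j)\|\le\delta\sum_{j}|c_j|\le\sqrt n\,\delta\,\|c\|_2$, which is impossible; so I assume it. Then $G:=\big(a(v_i,v_j)\big)_{i,j=1}^n$ is symmetric positive definite, and I set $\tilde v_j:=\sum_{i=1}^n(G^{-1/2})_{ij}\,v_i$. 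A direct computation gives $a(\tilde v_i,\tilde v_j)=(G^{-1/2}GG^{-1/2})_{ij}=\delta_{ij}$, and since $G^{-1/2}$ is invertible the $\tilde v_j$ form an $a$-orthonormal basis of $W:=\operatorname{span}\{v_1,\dots,v_n\}$.

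Because $\|v_j\|=\|\tilde v_j\|=1$, for any $a$-orthonormal $n$-tuple $\{w_j\}_{j=1}^n\subset H$ one has $\sum_{j=1}^n\|w_j-v_j\|^2=2n-2\sum_{j=1}^n a(w_j,v_j)$. Hence it is enough to prove the extremal identity that $\{\tilde v_j\}$ maximizes $\sum_{j=1}^n a(w_j,v_j)$ among all such tuples: taking $\{w_j\}=\{u_j\}$ (admissible since $a(u_i,u_j)=\delta_{ij}$) then gives $\sum_{j=1}^n\|\tilde v_j-v_j\|^2\le\sum_{j=1}^n\|u_j-v_j\|^2\le n\delta^2$, and discarding all but the $j$-th term on the left yields $\|\tilde v_j-v_j\|\le\sqrt n\,\delta$ for each $j$, which is the claim.

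For the extremal identity I would pass to the synthesis operators $\mathcal V,\mathcal T\colon\mathbb R^n\to H$ defined by $\mathcal Ve_j=v_j$ and $\mathcal Te_j=w_j$; here $\mathcal T$ is an isometry, $\mathcal V^*\mathcal V=G$ (adjoints taken with respect to $a(\cdot,\cdot)$), and $\sum_{j}a(w_j,v_j)=\operatorname{tr}(\mathcal T^*\mathcal V)$. The polar factorization $\mathcal V=\mathcal Z\,G^{1/2}$ with $\mathcal Z:=\mathcal V G^{-1/2}$ produces an isometry $\mathcal Z$ ($\mathcal Z^*\mathcal Z=I$) whose columns $\mathcal Z e_j$ are precisely the $\tilde v_j$, so that $\sum_{j}a(\tilde v_j,v_j)=\operatorname{tr}(\mathcal Z^*\mathcal V)=\operatorname{tr}(G^{1/2})$. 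For a general competitor, $C:=\mathcal T^*\mathcal Z$ is an $n\times n$ matrix of operator norm at most $1$ and $\operatorname{tr}(\mathcal T^*\mathcal V)=\operatorname{tr}(C\,G^{1/2})$; diagonalizing $G^{1/2}$ and using that its eigenvalues are nonnegative while the diagonal entries of any orthogonal conjugate of $C$ have modulus at most $\|C\|\le1$, one obtains $\operatorname{tr}(C\,G^{1/2})\le\operatorname{tr}(G^{1/2})=\sum_{j}a(\tilde v_j,v_j)$, which is the desired maximality.

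\textbf{Main obstacle.} The crux is the extremal step of the third paragraph: one must see that the symmetric orthogonalization minimizes $\sum_j\|\tilde v_j-v_j\|^2$ not merely among orthonormal bases of $W$ but among \emph{all} $a$-orthonormal $n$-tuples of $H$, so that the generally non-$W$-valued family $\{u_j\}$ is admissible. The only genuinely nontrivial estimate is the trace inequality $\operatorname{tr}(C\,G^{1/2})\le\operatorname{tr}(G^{1/2})$ for a contraction $C$, and it is exactly this comparison against $\{u_j\}$ that yields the sharp constant $\sqrt n$; estimating $G-I$ entrywise instead would only give a bound of order $n\,\delta$.
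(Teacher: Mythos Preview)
Your proof is correct and is essentially the paper's own argument: both construct $\tilde v_j$ as the L\"owdin/polar orthogonalization $\tilde{\mathcal V}=\mathcal V G^{-1/2}$ (the paper obtains this via the polar factorization $\mathcal V=\tilde{\mathcal V}P$ with $P=(\mathcal V^T\mathcal V)^{1/2}=G^{1/2}$), and both prove the extremal inequality $\sum_j\|\tilde v_j-v_j\|^2\le\sum_j\|u_j-v_j\|^2$ by the same trace/contraction estimate --- the paper writes it as $\operatorname{tr}(\mathcal U^T\mathcal V)\le\sum_i\sigma_i(\mathcal U^T\tilde{\mathcal V})\sigma_i(P)\le\operatorname{tr}(P)$, which is your $\operatorname{tr}(C\,G^{1/2})\le\operatorname{tr}(G^{1/2})$. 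Your explicit verification that $\{v_j\}$ is linearly independent when $\sqrt n\,\delta<1$ is a point the paper leaves implicit.
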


   The following conclusion, which can be derived directly by a triangle inequality from \cref{prop:app_pro_orth}, shows the approximation property between orthonormal bases.
 \begin{corollary}\label{cor:vec}
  If  $\{v_{j}\}_{j=1}^{n}\subset H$ is $\delta$-quasi-orthogonal, i.e., there exists $\{u_{j}\}_{j=1}^{n}\subset H$ satisfying 
 \begin{align*}
         a(u_{i}, u_{j})=\delta_{ij},\quad \Vert u_{j}-v_{j}\Vert \leqslant\delta,\quad~ i,j=1,2,\ldots,n,
      \end{align*}
  then  there exists $\{\tilde{v}_{j}\}_{j=1}^{n}\subset\operatorname{span}\{v_{1},\ldots,v_{n}\}$ such that 
         \begin{align*}
          a(\tilde{v}_{i}, \tilde{v}_{j})&=\delta_{ij},\quad
          \operatorname{dist}(u_{j},\tilde{v}_{j})\leqslant\left\Vert u_{j}-\tilde{v}_{j}\right\Vert\leqslant(\sqrt{n}+1)\delta,\quad i,j=1,\ldots,n.
      \end{align*}
 \end{corollary}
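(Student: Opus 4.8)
The plan is to obtain \cref{cor:vec} directly from \cref{prop:app_pro_orth} and the triangle inequality, as the remark preceding it already indicates. First I would invoke \cref{prop:app_pro_orth}: since $\{v_j\}_{j=1}^{n}$ is $\delta$-quasi-orthogonal, it supplies an orthonormal basis $\{\tilde v_j\}_{j=1}^{n}$ of $\operatorname{span}\{v_1,\ldots,v_n\}$ --- orthonormal with respect to $a(\cdot,\cdot)$, the inner product whose induced norm we write simply as $\|\cdot\|$ --- with $\|\tilde v_j-v_j\|\leqslant\sqrt{n}\,\delta$ for each $j$. These $\tilde v_j$ are the vectors claimed by the corollary: by construction they satisfy $a(\tilde v_i,\tilde v_j)=\delta_{ij}$ and lie in $\operatorname{span}\{v_1,\ldots,v_n\}$.

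Next I would bound $\|u_j-\tilde v_j\|$. Two estimates are at hand: $\|u_j-v_j\|\leqslant\delta$, from the hypothesis of $\delta$-quasi-orthogonality, and $\|v_j-\tilde v_j\|\leqslant\sqrt{n}\,\delta$, from \cref{prop:app_pro_orth}. A single application of the triangle inequality then gives
\begin{align*}
    \|u_j-\tilde v_j\|\leqslant\|u_j-v_j\|+\|v_j-\tilde v_j\|\leqslant\delta+\sqrt{n}\,\delta=(\sqrt{n}+1)\delta,\quad j=1,\ldots,n.
\end{align*}

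For the remaining inequality $\operatorname{dist}(u_j,\tilde v_j)\leqslant\|u_j-\tilde v_j\|$, I would use that both vectors are unit vectors in the norm $\|\cdot\|=\|\cdot\|_{a}$: $a(u_j,u_j)=1$ by the quasi-orthogonality witnesses and $a(\tilde v_j,\tilde v_j)=1$ by the previous paragraph. Hence $\operatorname{dist}(u_j,\tilde v_j)=\inf_{c\in\mathbb{R}}\|u_j-c\,\tilde v_j\|\leqslant\|u_j-\tilde v_j\|$, taking $c=1$. Chaining this with the displayed estimate yields $\operatorname{dist}(u_j,\tilde v_j)\leqslant\|u_j-\tilde v_j\|\leqslant(\sqrt{n}+1)\delta$, completing the argument.

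I do not expect a genuine obstacle in this corollary once \cref{prop:app_pro_orth} is available; the only points deserving a moment's attention are bookkeeping ones --- that the orthonormalization furnished by \cref{prop:app_pro_orth} is with respect to $a(\cdot,\cdot)$, so that its output is $a$-orthonormal and directly comparable to the $u_j$, and that both families consist of $\|\cdot\|$-unit vectors, which is what legitimizes replacing $\operatorname{dist}(\cdot,\cdot)$ by $\|\cdot\|$ in the final step. All the substance sits inside \cref{prop:app_pro_orth}, a L\"owdin/Gram--Schmidt-type perturbation estimate for the orthogonalization of a nearly $a$-orthonormal family, which here we take as given.
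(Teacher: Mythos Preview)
Your proposal is correct and follows exactly the approach the paper indicates: the corollary is stated as an immediate consequence of \cref{prop:app_pro_orth} together with the triangle inequality, and that is precisely what you do. The only additional remark you spell out --- that $\operatorname{dist}(u_j,\tilde v_j)\leqslant\|u_j-\tilde v_j\|$ because both are $a$-unit vectors --- is the obvious leftover step and is handled correctly.
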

 
\subsubsection*{Orthogonal basis approximation}We arrive at the following proposition from \cref{cor:vec}, which tells the approximation property of orthonormal bases by the distance from one subspace to another and will play a crucial role in our analysis. 
 \begin{proposition}\label{prop:subspace_angle}
    Given $\varepsilon\in(0,1)$ and two subspaces $U, V\subset H$ with $ \operatorname{dim}(U)=\operatorname{dim}(V)=n$. If $ \operatorname{dist}(U, V)\leqslant\varepsilon,$
     then for any orthonormal basis $\{u_{j}\}_{j=1}^{n}$ of $U$, there exists an orthonormal basis $\{w_{j}\}_{j=1}^{n}$ of $V$ satisfying
      \begin{align*}
          \operatorname{dist}(u_{i}, w_{i})\leqslant (1+\sqrt{n})\sqrt{2-2\sqrt{1-\varepsilon^{2}}},\quad i,j=1,2,\ldots,n,
      \end{align*}
 \end{proposition}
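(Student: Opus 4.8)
The plan is to reduce the statement to \cref{cor:vec} by producing, from the given orthonormal basis $\{u_j\}_{j=1}^{n}$ of $U$, a family of unit vectors lying in $V$ that is $\delta$-quasi-orthogonal with witnessing system exactly $\{u_j\}_{j=1}^{n}$, for the sharp choice $\delta=\sqrt{2-2\sqrt{1-\varepsilon^{2}}}$. Concretely, I would set $v_j:=\mathcal{P}_{V}u_j/\Vert \mathcal{P}_{V}u_j\Vert$, the $a(\cdot,\cdot)$-orthogonal projection of $u_j$ onto $V$, renormalized. The first thing to check is that this is well defined: since $\operatorname{dist}(u_j,V)\leqslant\operatorname{dist}(U,V)\leqslant\varepsilon<1$ and $\Vert u_j-\mathcal{P}_{V}u_j\Vert=\operatorname{dist}(u_j,V)$, the Pythagorean identity $\Vert u_j-\mathcal{P}_{V}u_j\Vert^{2}=1-\Vert \mathcal{P}_{V}u_j\Vert^{2}$ for the orthogonal projection gives $\Vert \mathcal{P}_{V}u_j\Vert^{2}\geqslant 1-\varepsilon^{2}>0$. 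Using $a(u_j,\mathcal{P}_{V}u_j)=\Vert \mathcal{P}_{V}u_j\Vert^{2}$ (because $u_j-\mathcal{P}_{V}u_j$ is $a$-orthogonal to $V\ni \mathcal{P}_{V}u_j$), one computes $a(u_j,v_j)=\Vert \mathcal{P}_{V}u_j\Vert$, hence $\Vert u_j-v_j\Vert^{2}=2-2\,a(u_j,v_j)=2-2\Vert \mathcal{P}_{V}u_j\Vert\leqslant 2-2\sqrt{1-\varepsilon^{2}}=\delta^{2}$. Since moreover $a(u_i,u_j)=\delta_{ij}$, the family $\{v_j\}_{j=1}^{n}\subset H$ with $\Vert v_j\Vert=1$ is $\delta$-quasi-orthogonal, witnessed by $\{u_j\}_{j=1}^{n}$.

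Next I would apply \cref{cor:vec} to $\{v_j\}_{j=1}^{n}$ with this $\delta$ and these witnesses: it yields $\{w_j\}_{j=1}^{n}\subset\operatorname{span}\{v_1,\ldots,v_n\}$ with $a(w_i,w_j)=\delta_{ij}$ and $\operatorname{dist}(u_j,w_j)\leqslant\Vert u_j-w_j\Vert\leqslant(\sqrt{n}+1)\delta=(1+\sqrt{n})\sqrt{2-2\sqrt{1-\varepsilon^{2}}}$, which is precisely the asserted bound. It only remains to upgrade "$\{w_j\}$ is a basis of $\operatorname{span}\{v_1,\ldots,v_n\}$" to "$\{w_j\}$ is an orthonormal basis of $V$": the $w_j$ are $a$-orthonormal, hence linearly independent, so $\dim\operatorname{span}\{v_1,\ldots,v_n\}\geqslant n$; but $\operatorname{span}\{v_1,\ldots,v_n\}\subseteq V$ and $\dim V=n$, so $\operatorname{span}\{v_1,\ldots,v_n\}=V$ and $\{w_j\}_{j=1}^{n}$ is an orthonormal basis of $V$, completing the proof.

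The argument is short, and I do not expect a serious obstacle; the two places that need care are (a) the renormalization step, where one should avoid a crude triangle inequality $\Vert u_j-v_j\Vert\leqslant\Vert u_j-\mathcal{P}_{V}u_j\Vert+\Vert \mathcal{P}_{V}u_j-v_j\Vert$ and instead use the exact identity $\Vert u_j-v_j\Vert^{2}=2-2\Vert \mathcal{P}_{V}u_j\Vert$, which is what produces the sharp constant $\sqrt{2-2\sqrt{1-\varepsilon^{2}}}$ rather than $\varepsilon+1-\sqrt{1-\varepsilon^{2}}$; and (b) the dimension count that promotes the basis of $\operatorname{span}\{v_j\}$ to a basis of $V$. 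Finally, the degenerate case $n=1$, for which \cref{cor:vec} is not stated, is handled directly by taking $w_1=v_1$, so that $\operatorname{dist}(u_1,w_1)\leqslant\Vert u_1-v_1\Vert\leqslant\delta\leqslant(1+\sqrt{1})\sqrt{2-2\sqrt{1-\varepsilon^{2}}}$.
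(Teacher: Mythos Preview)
Your proposal is correct and follows essentially the same route as the paper: project the orthonormal basis $\{u_j\}$ onto $V$, renormalize to get unit vectors $v_j\in V$ that are $\sqrt{2-2\sqrt{1-\varepsilon^2}}$-quasi-orthogonal (witnessed by the $u_j$), and then invoke \cref{cor:vec}. The only cosmetic differences are that the paper first proves $\mathcal{P}_V|_U$ is an isomorphism to conclude $\operatorname{span}\{v_j\}=V$ \emph{before} applying \cref{cor:vec}, whereas you deduce this \emph{after} from the orthonormality of the output $\{w_j\}$, and the paper computes $\Vert u_j-v_j\Vert$ via the Pythagorean split $\Vert u_j-\mathcal{P}_Vu_j\Vert^2+\Vert \mathcal{P}_Vu_j-v_j\Vert^2$ rather than your identity $2-2\Vert\mathcal{P}_Vu_j\Vert$ (the two expressions coincide).
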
 
\begin{proof}
First, we show that $\mathcal{P}_{V}|_{U}$ is an isomorphism from $U$ to $V$. Indeed, for $\tilde{u}, \hat{u}\in U$ satisfying $\mathcal{P}_V \tilde{u}=\mathcal{P}_V\hat{u}$, we obtain from
 \begin{align*}
     a(\tilde{u}-\mathcal{P}_{V}\tilde{u}, v)=0, \quad v\in V,\\
     a(\hat{u}-\mathcal{P}_{V}\hat{u}, v)=0, \quad v\in V
 \end{align*}
 that 
 \begin{align*}
     a(\tilde{u}-\hat{u}, v)=0, \quad v\in V,
 \end{align*}
 and $\tilde{u}=\hat{u}$ due to $\operatorname{dist}(U, V)\leqslant\varepsilon<1$. Hence, $\mathcal{P}_{V}|_{U}$ is an  injection and then is isomorphism from $U$ to $V$ since $\operatorname{dim}(V)=\operatorname{dim}(U)$.
 
 Next we set $v_{j}=\frac{\mathcal{P}_{V}u_{j}}{\left\Vert\mathcal{P}_{V}u_{j}\right\Vert}$ for $j=1,2,\ldots,n$. Since $\mathcal{P}_{V}$ is an isomorphism,  we have that  $V=\operatorname{span}(\{v_{j}\}_{j=1}^{n})$ and 
\begin{align*}
    &\mathrel{\phantom{=}}\Vert u_{j}-v_{j}\Vert=\sqrt{\Vert u_{j}-\mathcal{P}_{V}u_{j}\Vert^{2}+\left\Vert\mathcal{P}_{V}u_{j}-\frac{\mathcal{P}_{V}u_{j}}{\Vert\mathcal{P}_{V}u_{j}\Vert}\right\Vert^{2}}\\
    &\leqslant\sqrt{\operatorname{dist}^{2}(U, V)+\left(1-\sqrt{1-\operatorname{dist}^{2}(U, V)}\right)^{2}}\leqslant\sqrt{2-2\sqrt{1-\varepsilon^{2}}},
\end{align*}
i.e., $\{v_{j}\}_{j=1}^{n}$ is $\sqrt{2-2\sqrt{1-\varepsilon^{2}}}$-quasi-orthogonal. Then by \cref{cor:vec}, we complete the proof.
\end{proof}

\subsubsection*{Dimension-preserving} For $V^{h}=\bigoplus_{i=1}^{p}X_{i}$, we consider subspaces $X, Y\subset V^{h}\subset H$ with decompositions as follows:
 \begin{align*}
     X=\bigoplus_{i=1}^{q}X_{i}, \quad Y=\sum_{i=1}^{q}Y_{i},
 \end{align*}
 where $\operatorname{dim}(X)=N$ and $\operatorname{dim}(X_{i})=\operatorname{dim}(Y_{i})=d_{i}$. Obviously,  $\operatorname{dim}(Y)\leqslant\sum_{i=1}^{q}\operatorname{dim}(Y_{i})=N$. We have the following conclusion telling when $\operatorname{dim}(Y)=N$ holds
 and will be used in our analysis. The proof is given in \cref{sec:p_fram_keep_dim}.

\begin{proposition}\label{thm:fram_keep_dim}
Given $\varepsilon\in(0,\min_{1\leqslant i\leqslant q}\sqrt{\frac{4(1+\sqrt{d_{i}})^{2}N-1}{4(1+\sqrt{d_{i}})^{4}N^{2}}})$. If
$$\max_{i=1,\ldots,q}\operatorname{dist}(X_{i}, Y_{i})<\varepsilon,$$ then
\begin{align}
    Y=\bigoplus_{i=1}^{q}Y_{i}.
\end{align}
\end{proposition}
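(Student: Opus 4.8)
The plan is to show that the sum $Y=\sum_{i=1}^q Y_i$ is in fact a direct sum by showing that the total dimension is $N$; equivalently, that any collection of bases of the $Y_i$'s, taken together, is linearly independent. First I would fix, for each $i$, an orthonormal basis $\{u^{(i)}_j\}_{j=1}^{d_i}$ of $X_i$ (orthonormal with respect to $a(\cdot,\cdot)$). Since $\operatorname{dist}(X_i,Y_i)<\varepsilon<1$ and $\dim X_i=\dim Y_i=d_i$, \cref{prop:subspace_angle} supplies an orthonormal basis $\{w^{(i)}_j\}_{j=1}^{d_i}$ of $Y_i$ with
\begin{align}\label{ine:proposal_close}
    \left\Vert u^{(i)}_j - w^{(i)}_j\right\Vert \leqslant (1+\sqrt{d_i})\sqrt{2-2\sqrt{1-\varepsilon^2}}=:\eta_i,\quad j=1,\ldots,d_i,
\end{align}
after rescaling the $w^{(i)}_j$ to have unit norm (note $\operatorname{dist}(u,w)\leqslant\|u-w\|$ once both are unit vectors in the $a$-norm, which is the norm we use). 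Here I am using $\sqrt{2-2\sqrt{1-\varepsilon^2}}\leqslant\sqrt{2}\,\varepsilon$ for small $\varepsilon$, and the hypothesis on $\varepsilon$ is designed exactly so that $\sum_i$-type bounds on these $\eta_i$ stay below a threshold like $1/\sqrt{N}$ or $1$.

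Next I would argue linear independence of the combined family $\{w^{(i)}_j : 1\leqslant i\leqslant q,\ 1\leqslant j\leqslant d_i\}$ (which has $N$ elements). The combined family $\{u^{(i)}_j\}$ is an $a$-orthonormal basis of $X=\bigoplus_{i=1}^q X_i$, hence genuinely orthonormal, and \eqref{ine:proposal_close} says the $w$-vectors are a small perturbation of this orthonormal system. The standard fact is that if $\{e_k\}_{k=1}^N$ is orthonormal and $\{f_k\}_{k=1}^N$ satisfies $\sum_{k=1}^N\|e_k-f_k\|^2<1$, then $\{f_k\}$ is linearly independent: indeed, for $c=(c_k)$ with $\|c\|_{\ell^2}=1$ one has $\|\sum_k c_k f_k\|\geqslant \|\sum_k c_k e_k\| - \|\sum_k c_k(e_k-f_k)\| \geqslant 1 - \big(\sum_k\|e_k-f_k\|^2\big)^{1/2}>0$ by Cauchy–Schwarz. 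So it suffices to check $\sum_{i=1}^q\sum_{j=1}^{d_i}\eta_i^2 = \sum_{i=1}^q d_i\,\eta_i^2 <1$; since $\eta_i^2 = (1+\sqrt{d_i})^2(2-2\sqrt{1-\varepsilon^2})$ and $2-2\sqrt{1-\varepsilon^2}\leqslant 2\varepsilon^2$, it is enough that $2\varepsilon^2\sum_{i=1}^q d_i(1+\sqrt{d_i})^2<1$, i.e. $2\varepsilon^2\cdot N\cdot\max_i(1+\sqrt{d_i})^2<1$ would do, which the stated bound on $\varepsilon$ guarantees (one should match the constants carefully — the displayed threshold $\sqrt{\frac{4(1+\sqrt{d_i})^2N-1}{4(1+\sqrt{d_i})^4N^2}}$ is precisely the root of the relevant quadratic in $\varepsilon^2$, so the inequality $\sum_i d_i\eta_i^2<1$ falls out after simplification).

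Once the $N$ vectors $\{w^{(i)}_j\}$ are linearly independent, $\dim Y=\dim\operatorname{span}\{w^{(i)}_j\}=N=\sum_{i=1}^q d_i=\sum_{i=1}^q\dim Y_i$, which forces the sum $Y=\sum_{i=1}^q Y_i$ to be direct, i.e. $Y=\bigoplus_{i=1}^q Y_i$. This is because if the sum were not direct, a dimension count would give $\dim(\sum_i Y_i)<\sum_i\dim Y_i=N$, contradicting what we just proved.

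\emph{Main obstacle.} The conceptual steps are routine; the real work is bookkeeping the constants so that the hypothesis $\varepsilon<\min_i\sqrt{\frac{4(1+\sqrt{d_i})^2N-1}{4(1+\sqrt{d_i})^4N^2}}$ is exactly what makes $\sum_{i=1}^q d_i\eta_i^2<1$ go through — in particular tracking whether one uses the sharp value $2-2\sqrt{1-\varepsilon^2}$ or its bound $2\varepsilon^2$, and making sure the $\max_i$ over $d_i$ interacts correctly with the sum $\sum_i d_i$ (one wants to bound each $\eta_i^2$ using its own $d_i$ and then sum). A secondary subtlety is justifying that the perturbation estimate \eqref{ine:proposal_close} from \cref{prop:subspace_angle} can be summed over the blocks $i=1,\ldots,q$ even though \cref{prop:subspace_angle} is applied blockwise and independently — this is fine because the combined $u$-system is orthonormal across blocks (as $X=\bigoplus X_i$ is an $a$-orthogonal decomposition of eigenspace-type subspaces), so the per-block errors simply add in the $\ell^2$ sense.
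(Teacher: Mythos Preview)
Your approach is correct and genuinely different from the paper's. Both proofs begin the same way, applying \cref{prop:subspace_angle} (more precisely, the norm bound from \cref{cor:vec} that underlies it) blockwise to produce orthonormal bases $\{w^{(i)}_j\}$ of $Y_i$ close to the orthonormal bases $\{u^{(i)}_j\}$ of $X_i$. From there the arguments diverge: the paper expands each $y_{ij}$ in the full $a$-orthonormal basis $\{x_{rt}\}_{r\leqslant p}$ of $V^h$, forms the $N_g\times N$ coefficient matrix $B_1$, and shows its top $N\times N$ block $B_2$ is strictly diagonally dominant (hence nonsingular by Gershgorin), yielding $\operatorname{rank}(B_1)=N$. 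You instead invoke the standard Paley--Wiener perturbation lemma: if $\{e_k\}$ is orthonormal and $\sum_k\|e_k-f_k\|^2<1$ then $\{f_k\}$ is linearly independent. Your route is shorter and avoids the matrix machinery; the paper's route is perhaps more explicit about how the ambient space $V^h$ enters.

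One correction on the constants. Your crude bound $2-2\sqrt{1-\varepsilon^2}\leqslant 2\varepsilon^2$ leads to the sufficient condition $2\varepsilon^2 N\max_i(1+\sqrt{d_i})^2<1$, and you assert that the stated hypothesis on $\varepsilon$ guarantees this---it does \emph{not}: writing $a=\max_i(1+\sqrt{d_i})^2$, the paper's threshold satisfies $\frac{4Na-1}{4N^2a^2}>\frac{1}{2Na}$, so the hypothesis allows $\varepsilon^2$ larger than $\frac{1}{2Na}$. However, your actual need (keeping the exact quantity $2-2\sqrt{1-\varepsilon^2}$) is $(2-2\sqrt{1-\varepsilon^2})\sum_i d_i(1+\sqrt{d_i})^2<1$, and this \emph{is} implied by the stated hypothesis, because the paper's threshold is precisely the root of $(2-2\sqrt{1-\varepsilon^2})\,N\max_i(1+\sqrt{d_i})^2=1$ and $\sum_i d_i(1+\sqrt{d_i})^2\leqslant N\max_i(1+\sqrt{d_i})^2$. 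So your parenthetical is slightly off---the threshold solves the equation with $N\max_i a_i$, not with $\sum_i d_i a_i$---but since the former is the more restrictive condition, your inequality follows and the proof goes through.
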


\subsubsection*{An application of quasi-orthogonality} Note that \cref{thm:cluster_eigenfunc} tells only that each orthonormal eigenfunction of \cref{eq:fd_weak_form_leq} approximates some eigenfunction of \cref{eq:weak_form_leq}. However,  these exact eigenfunctions of \cref{eq:weak_form_leq} may not be orthonormal to each other (see Corollary 2.11 in \cite{dai2015convergence}). In practical applications, the approximate property of the approximate eigenfunctions to the exact orthonormal eigenfunctions is usually required, which is for structure-preserving and preventing the accumulation of errors of approximations.
 
From solving $\cref{eq:fd_weak_form_leq}$, we may obtain orthogonal or quasi-orthogonal (when the algebraic error of the orthogonalization is taken into account) normalized basis $\{u_{ij}^{h}\}$ of $M_{h}(\lambda_{i})$. With the investigation of the quasi-orthogonality, we are able to estimate the approximation error of the approximate eigenfunctions to the exact orthonormal eigenfunctions.

\begin{theorem}\label{prop:clus_to_vec}
Let $\{u_{ij}^{h}\}$ be solutions of \cref{eq:fd_weak_form_leq}. 
If $\operatorname{dist}\left(\bigoplus_{i=1}^{q}M(\lambda_{i}), V^{h}\right)\leqslant \epsilon_{*}$, then there exists an orthonormal basis $\{u^{o}_{ij}\}$ of $M(\lambda_{i})$  with $b(u^{o}_{ij}, u^{o}_{kl})=\delta_{ik}\delta_{jl}$ such that 
     \begin{align*}
         \operatorname{dist}\left(u^{o}_{ij},
         u^{h}_{ij}\right)\leqslant C_{**}\operatorname{dist}\left(\bigoplus_{i=1}^{q}M(\lambda_{i}), V^{h}\right), \quad (1,1)\leqslant(i,j)\leqslant(q,d_{q}),
     \end{align*}
     where $C_{**}=\max_{i=1,\cdots,q}\frac{2(\sqrt{d_{i}}+1)}{\sqrt{\lambda}_{i}}C_{*}$. The constants $\epsilon_{*}$ and $C_{*}$ are defined in \cref{thm:cluster_eigenfunc}, and $C_{*}$ is independent of  $V^{h}$.
\end{theorem}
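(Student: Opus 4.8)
The plan is to move the whole problem into the $a(\cdot,\cdot)$-orthonormal setting in which \cref{prop:app_pro_orth} and \cref{cor:vec} are phrased, orthonormalize there, and then rescale. Put $\delta_0:=\operatorname{dist}\bigl(\bigoplus_{i=1}^{q}M(\lambda_i),V^h\bigr)$. Eigenfunctions of \cref{eq:weak_form_leq} belonging to distinct eigenvalues are orthogonal in both $a(\cdot,\cdot)$ and $b(\cdot,\cdot)$, and $b(u^h_{ij},u^h_{kl})=\delta_{ik}\delta_{jl}$; hence it suffices to work one cluster at a time. For each fixed $i\in\{1,\dots,q\}$ I will build a $b$-orthonormal basis $\{u^o_{ij}\}_{j=1}^{d_i}$ of $M(\lambda_i)$ with $\operatorname{dist}(u^o_{ij},u^h_{ij})\le\frac{2(\sqrt{d_i}+1)}{\sqrt{\lambda_i}}C_*\delta_0$, and then take the union over $i$ and $j$; since the spaces $M(\lambda_i)$ are mutually $b$-orthogonal this union is globally $b$-orthonormal, and the constant becomes $C_{**}=\max_{1\le i\le q}\frac{2(\sqrt{d_i}+1)}{\sqrt{\lambda_i}}C_*$.

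Fix $i$. By \cref{eq:fd_weak_form_leq} the set $\{u^h_{ij}\}_{j=1}^{d_i}$ is $a$-orthogonal with $\|u^h_{ij}\|^2=\lambda^h_{ij}\ge\lambda_i$, so $u_j:=u^h_{ij}/\sqrt{\lambda^h_{ij}}$ is $a$-orthonormal. By \cref{thm:cluster_eigenfunc} I may pick $\hat u_{ij}\in M(\lambda_i)$ with $\|u^h_{ij}-\hat u_{ij}\|\le C_*\delta_0$; set $v_j:=\hat u_{ij}/\sqrt{\lambda^h_{ij}}\in M(\lambda_i)$, so that $\|v_j-u_j\|\le C_*\delta_0/\sqrt{\lambda_i}=:\delta$. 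Because \cref{thm:cluster_eigenfunc} says nothing about the size of $\hat u_{ij}$, I renormalize: from $\bigl|\,\|v_j\|-1\,\bigr|\le\|v_j-u_j\|\le\delta$, the unit vectors $\bar v_j:=v_j/\|v_j\|$ satisfy $\|\bar v_j-u_j\|\le 2\delta$, so $\{\bar v_j\}_{j=1}^{d_i}\subset M(\lambda_i)$ is $2\delta$-quasi-orthogonal with witnesses $\{u_j\}$.

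The crux is now \cref{cor:vec}, applied with $n=d_i$ and quasi-orthogonality parameter $2\delta$: it yields $a$-orthonormal vectors $\{\tilde v_j\}_{j=1}^{d_i}\subset\operatorname{span}\{\bar v_1,\dots,\bar v_{d_i}\}\subseteq M(\lambda_i)$ with $\operatorname{dist}(u_j,\tilde v_j)\le\|u_j-\tilde v_j\|\le 2(\sqrt{d_i}+1)\delta$. Being $d_i$ orthonormal — hence independent — vectors inside the $d_i$-dimensional space $M(\lambda_i)$, they form an $a$-orthonormal basis of $M(\lambda_i)$. On $M(\lambda_i)$ one has $a(\cdot,\cdot)=\lambda_i b(\cdot,\cdot)$, so $u^o_{ij}:=\sqrt{\lambda_i}\,\tilde v_j$ is $b$-orthonormal; and since $\operatorname{dist}(\cdot,\cdot)$ is unchanged under rescaling either argument while $u^o_{ij}$ is a multiple of $\tilde v_j$ and $u^h_{ij}$ a multiple of $u_j$, we get $\operatorname{dist}(u^o_{ij},u^h_{ij})=\operatorname{dist}(\tilde v_j,u_j)\le 2(\sqrt{d_i}+1)\delta=\frac{2(\sqrt{d_i}+1)}{\sqrt{\lambda_i}}C_*\delta_0$, which closes the argument.

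I expect the one genuinely delicate point to be the step asserting that $\{\bar v_j\}$ spans a $d_i$-dimensional subspace, without which \cref{cor:vec} (through \cref{prop:app_pro_orth}) cannot return $d_i$ orthonormal vectors and $\{\tilde v_j\}$ need not exhaust $M(\lambda_i)$. I would handle this exactly as in the proof of \cref{prop:subspace_angle}: for $\delta$ small the Gram matrix of $\{\bar v_j\}$ in $a(\cdot,\cdot)$ is $O(\delta)$-close to the identity and therefore invertible, so the $\bar v_j$ are independent; under the hypothesis $\delta_0\le\epsilon_*$ — shrinking $\epsilon_*$ if necessary — this is in force. Everything else is bookkeeping; in particular the factor $2$ in $C_{**}$ is precisely the cost of the renormalization $v_j\mapsto\bar v_j$, which cannot be avoided because \cref{thm:cluster_eigenfunc} gives no control on $\|\hat u_{ij}\|$.
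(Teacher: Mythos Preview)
Your argument is correct and matches the paper's proof essentially step for step: normalize $u^h_{ij}$ and $\hat u_{ij}$ in the $a$-norm, observe that the normalized $\hat u_{ij}$'s are $\tfrac{2C_*\delta_0}{\sqrt{\lambda_i}}$-quasi-orthogonal with the normalized $u^h_{ij}$'s as witnesses, apply \cref{cor:vec}, and rescale by $\sqrt{\lambda_i}$ to pass from $a$- to $b$-orthonormality using $a=\lambda_i b$ on $M(\lambda_i)$. Your explicit treatment of the rescaling and of the dimension issue (linear independence of the $\bar v_j$) are points the paper leaves implicit, but the route is the same.
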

\begin{proof}
For the approximate eigenpairs $\left\{(\lambda^{h}_{ij}, u^{h}_{ij})\right\}_{(1,1)\leqslant(i,j)\leqslant(q,d_{q})}$, we obtain from \cref{thm:cluster_eigenfunc} that there exist $\hat{u}_{ij}\in M(\lambda_{i})$ and the constant $C_{*}$ that is defined in \cref{thm:cluster_eigenfunc} and is independent of $V^{h}$, such that 
\begin{align*}
      \left\Vert u^{h}_{ij}-\hat{u}_{ij}\right\Vert\leqslant C_{*}\operatorname{dist}\left(\bigoplus_{i=1}^{q}M(\lambda_{i}), V^{h}\right),\quad (1,1)\leqslant(i,j)\leqslant(q,d_{q}),
  \end{align*}
which together with the fact $\|u_{ij}^{h}\|_{a}=\sqrt{\lambda_{ij}^{h}}$ and $\lambda_{ij}^{h}\geqslant\lambda_{i}$ yields that
  \begin{align*}
        &\left\Vert\frac{u^{h}_{ij}}{\Vert u^{h}_{ij}\Vert}-\frac{\hat{u}_{ij}}{\Vert \hat{u}_{ij}\Vert}\right\Vert=\frac{1}{\sqrt{\lambda^{h}_{ij}}}\left\Vert u^{h}_{ij}-\hat{u}_{ij}+\left(1-\frac{\Vert u^{h}_{ij}\Vert}{\Vert \hat{u}_{ij}\Vert}\right)\hat{u}_{ij}\right\Vert\\\leqslant&\frac{1}{\sqrt{\lambda}_{i}}\left(\left\Vert u^{h}_{ij}-\hat{u}_{ij}\right\Vert+\left|\Vert \hat{u}_{ij}\Vert-\Vert u^{h}_{ij}\Vert\right|\right)\leqslant\frac{2}{\sqrt{\lambda}_{i}}C_{*}\operatorname{dist}\left(\bigoplus_{i=1}^{q}M(\lambda_{i}), V^{h}\right).
  \end{align*}

Then $\{\frac{\hat{u}_{ij}}{\Vert \hat{u}_{ij}\Vert}\}_{j=1}^{d_{i}}$ is $\frac{2}{\sqrt{\lambda}_{i}}C_{*}\operatorname{dist}\left(\bigoplus_{i=1}^{q}M(\lambda_{i}), V^{h}\right)$-quasi-orthogonal and  it follows from \cref{cor:vec} that there exists an orthonormal basis $\{u^{o}_{ij}\}$ of $M(\lambda_{i})$  with $b(u^{o}_{ij}, u^{o}_{ik})=\delta_{jk}$ such that 
     \begin{align*}
         \operatorname{dist}\left(u^{o}_{ij},
         u^{h}_{ij}\right)\leqslant C_{**}\operatorname{dist}\left(\bigoplus_{i=1}^{q}M(\lambda_{i}), V^{h}\right),\quad j=1,\ldots, d_{i},
     \end{align*}
    where $C_{**}=\max_{i=1,\cdots,q}\frac{2(\sqrt{d_{i}}+1)}{\sqrt{\lambda}_{i}}C_{*}$. We complete the proof.
 \end{proof}
 
 We see that \cref{prop:clus_to_vec} tells that, for the finite dimensional approximation of an eigenvalue problem, there exists a set of orthonormal eigenfunctions whose distance to the orthonormal approximate eigenfunctions is controlled by the distance of subspaces.
 
 In the next section, we will present the approximation of iterative solutions $\left\{\left(\lambda_{ij}^{(n)}, u_{ij}^{(n)}\right)\right\}$ produced by ParO to the solutions of the discrete problem \cref{eq:fd_weak_form_leq} using \cref{prop:clus_to_vec}. Then we obtain the approximation errors of iterative solutions to solutions of \cref{eq:weak_form_leq} from \cref{thm:cluster_eigen}, \cref{prop:clus_to_vec}
and the triangle inequalities
 \begin{align*}
     \left|\lambda_{i}-\lambda_{ij}^{(n)}\right|&\leqslant\left|\lambda_{i}-\lambda^{h}_{ij}\right|+\left|\lambda^{h}_{ij}-\lambda_{ij}^{(n)}\right|,\\\operatorname{dist}(u_{ij}^{o}, u_{ij}^{(n)})&\leqslant\operatorname{dist}(u_{ij}^{o}, u^{h,o}_{ij})+\operatorname{dist}(u^{h,o}_{ij}, u_{ij}^{(n)}),
 \end{align*}
for $(1,1)\leqslant(i,j)\leqslant(q,d_{q})$, where
$\{u^{o}_{ij}\}_{j=1}^{d_{i}}$ and $\{u^{h,o}_{ij}\}_{j=1}^{d_{i}}$ with $b(u^{o}_{ij}, u^{o}_{kl})=b(u^{h,o}_{ij}, u^{h,o}_{kl})=\delta_{ik}\delta_{jl}$ are orthogonal bases of $M(\lambda_{i})$ and $M_{h}(\lambda_{i})$, respectively.
 
\section{Convergence and Error Estimates}With the quasi-orthogonality, in this section, we are able to obtain the convergence and error estimates of the numerical approximations produced by ParO for clustered eigenvalue problems.

\subsection{Algorithm framework}
We first recall the framework of ParO for the first $N$ clustered eigenvalues and their corresponding eigenfunctions of \cref{eq:weak_form_leq}, which is stated as \cref{algo:fram_matr}. We mention that \cref{algo:fram_matr} is indeed a modified version of Algorithm 1.1 in \cite{dai2014parallel} (see \cref{flow_fram} for its flowchart). We see form \cref{flow_fram} clearly that our framework has a feature of 
two level parallelization in Step 2: one level is obtained by updating $N$ eigenfunctions intrinsically in parallel; the other level is obtained by applying the standard parallel strategies when updating each eigenfunction.
\begin{algorithm}[htbp]
\caption{A framework for ParO}\label{algo:fram_matr}
\begin{enumerate}
\item Given a finite dimensional subspace $V^{h}$ and initial data $\left(\lambda_{k}^{(0)}, u_{k}^{(0)}\right)\in\mathbb{R}\times V^{h}$ for $k=1, 2, \ldots, N$, let $n=0$.
\item For $k=1,2,\ldots, N$, update $u_{k}^{(n)}$ to obtain $u_{k}^{(n+1/2)}$ in parallel.
\item Let $u_{k}^{(n+1)}=u_{k}^{(n+1/2)}$ for $k=1,\ldots, N$. Or if necessary, project \cref{eq:weak_form_leq} onto $U_{n+1}=\operatorname{span}\left\{u_{1}^{(n+1/2)}, \ldots, u_{N}^{(n+1/2)}\right\}$ and obtain eigenpairs $\left(\lambda_{k}^{(n+1)}, u_{k}^{(n+1)}\right)$. 
\item If not converge, let $n=n+1$ and go to Step $2$.
\end{enumerate}
\end{algorithm}

\begin{figure}[htbp!]
	\centering
    \includegraphics[width=13cm]{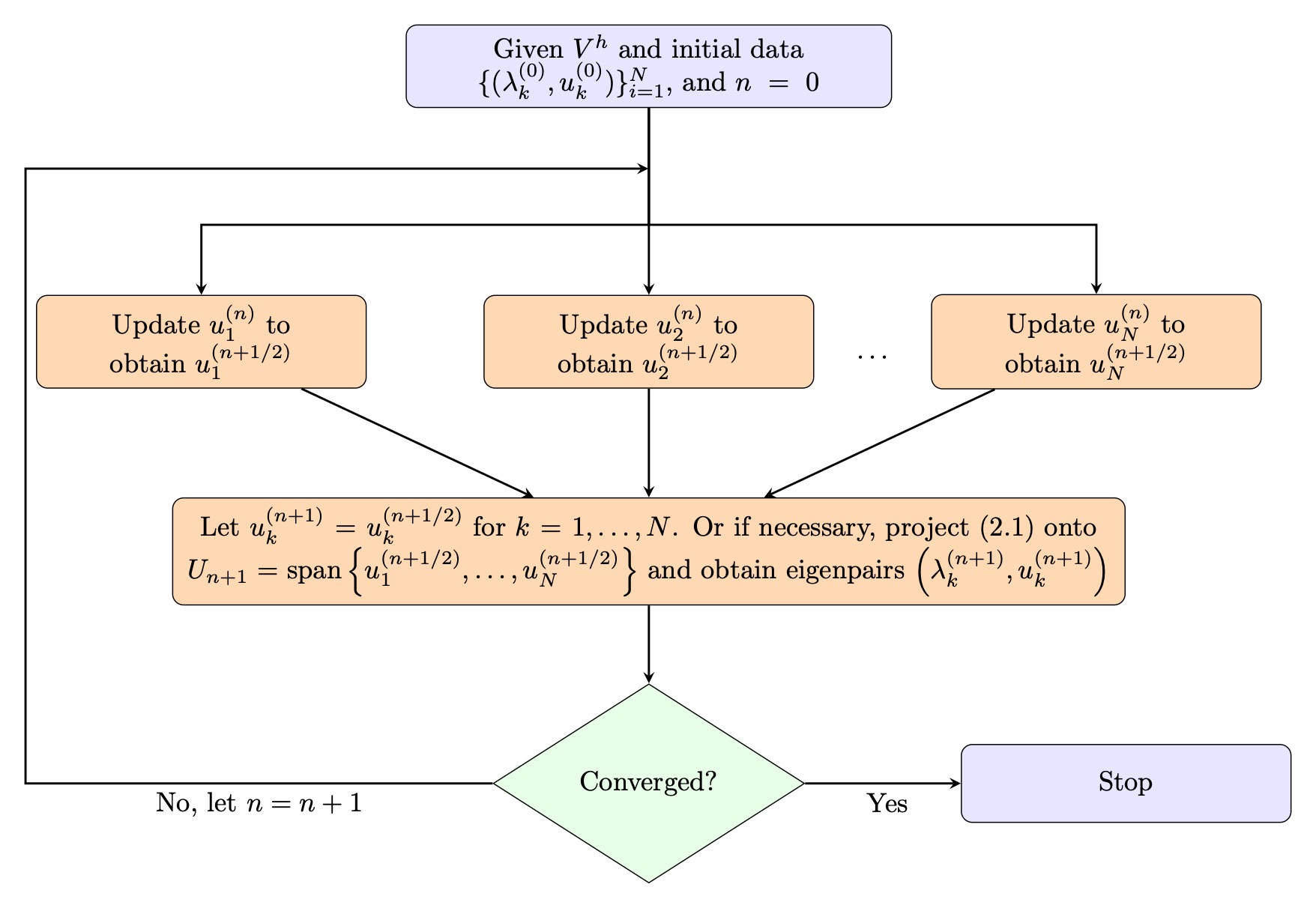}
 	\caption{Flowchart of \cref{algo:fram_matr}}\label{flow_fram}
\end{figure}

\subsubsection*{Step 1} We see that there are several ways to provide initial data in step 1 of Algorithm 4.1. We may obtain initial data from
\begin{itemize}
\item physical observation or data (see, e.g., \cite{dai2014parallel, payne1992iterative}). For instance, in electronic structure calculations, we may choose initial data from Gaussian-type orbital, Slater-type orbital, atomic orbital based guesses, and so on;
    \item solving the eigenvalue problem on a coarse grid; 
\item neural networks based guesses \cite{hazra2024predicting, xu2024subspace}.
\end{itemize}

Since we look for clustered eigenvalues and their corresponding eigenfunctions, we shall consider the approximation of each eigenspace as mentioned in Introduction. We understand that it is not trivial to obtain the multiplicity $d_{i}$ of each eigenvalue $\lambda_{i}$. A possible way to approximate the multiplicities is to cluster the initial guesses $\lambda_{1}^{(0)}\leqslant\lambda_{2}^{(0)}\leqslant\cdots\leqslant\lambda_{N}^{(0)}$. By clustering methods such as Bayesian Information Criterion and Silhouette method (see e.g., \cite{rousseeuw1987silhouettes, schwarz1978estimating}), we can get $q'$ clusters with $d_{i}'$ eigenpairs in the $i-$th cluster $(i=1,\ldots,q')$, that is,  
\begin{align}\label{eq:clustering_eigenpairs}
    \left\{\left(\lambda_{ij}^{(0)}, u_{ij}^{(0)}\right)\right\}_{i=1,\ldots,q',j=1,\ldots,d_{i}'}= \left\{\left(\lambda_{k}^{(0)}, u_{k}^{(0)}\right)\right\}_{k=1,\ldots,N}.
\end{align}

 In this paper, we assume that $q'=q$ and $d_{i}'=d_{i}$ for $i=1,\ldots,q'$. Such an assumption is likely to hold since the previously mentioned methods may be able to give good initial data. We also want to point out that, the numerical experiments in \cite{dai2014parallel, pan2017parallel} have shown that such an assumption is unnecessary in practical computations.

Define $U_{0}=\operatorname{span}\left\{u_{1}^{(0)}, u_{2}^{(0)}, \ldots, u_{N}^{(0)}\right\}$ and 
\begin{align*}
    \lambda_{ij}^{(n+1)}:=\lambda_{\sum_{r=0}^{i-1}d_{r}+j}^{(n+1)},\quad u_{ij}^{(n+1)}:=u_{\sum_{r=0}^{i-1}d_{r}+j}^{(n+1)},\quad (1,1)\leqslant(i,j)\leqslant(q,d_{q}), \quad n\geqslant0.
\end{align*}
Set 
\begin{align}\label{def:pieces}
    U_{n}^{(i)}=\operatorname{span}\left\{u_{i1}^{(n)}, \ldots, u_{id_{i}}^{(n)}\right\},\quad i=1,\ldots, q,
\end{align}
then $U_{n}=\sum_{i=1}^{q} U_{n}^{(i)}$. 

\subsubsection*{Step 2} To update each eigenfunction in step $2$ of \cref{algo:fram_matr}, as pointed out in \cite{dai2014parallel}, we can apply the shifted-inverse approach, Chebyshev filtering, and so on. Define $ \mathcal{F}_{n}: U_{n}\rightarrow U_{n+1}$ as follows
\begin{align*}
    \mathcal{F}_{n}^{(i)}:=\mathcal{F}_{n}|_{U_{n}^{(i)}}:u_{ij}^{(n)}=u_{\sum_{r=0}^{i-1}d_{r}+j}^{(n)}\mapsto u_{\sum_{r=0}^{i-1}d_{r}+j}^{(n+1/2)}:= u_{ij}^{(n+1/2)},\quad (1,1)\leqslant(i,j)\leqslant(q,d_{q}).
\end{align*}

Set
\begin{align*}
    U_{n+1/2}^{(i)}:=\{u^{(n+1/2)}_{i1}, \dots, u^{(n+1/2)}_{id_{i}}\},\quad i=1,\ldots,q.
\end{align*}
We have $U_{n+1}=\sum_{i=1}^{q} U_{n+1/2}^{(i)}$. 
 
\subsubsection*{Step 3}In step $3$ of \cref{algo:fram_matr}, if we choose to update the approximations, we can solve a small scale eigenvalue problem as follows: find $(\lambda^{(n+1)}, u^{(n+1)})\in\mathbb{R}\times U_{n+1}$ satisfying 
	\begin{align}\label{eq:pro_ei_pro}
	    a\left(u^{(n+1)},v\right)=\lambda^{(n+1)}b\left(u^{(n+1)},v\right)\quad \forall v\in U_{n+1},
	\end{align}
to obtain eigenpairs $(\lambda_{ij}^{(n+1)}, u_{ij}^{(n+1)})$ with
$b\left(u_{ij}^{(n+1)}, u_{kl}^{(n+1)}\right)=\delta_{ik}\delta_{jl}$ for $(1,1)\leqslant(i,j), (k,l)\leqslant(q,d_{q})$. 

\subsubsection*{Error estimate} The following theorem shows the approximation errors of eigenpairs 
 when solving a small scale eigenvalue problem is carried out under the assumption that eigenfunctions are approximated well to a certain in Step 2. The proof is given in \cref{prof:conv_fram}.
 
\begin{theorem}\label{prop:conv_fram} If $\operatorname{dist}(M_{h}(\lambda_{i}), U_{n+1/2}^{(i)})\ll1 (i=1,\ldots,q)$, then after solving a small scale eigenvalue problem \cref{eq:pro_ei_pro} in $U_{n+1}=\sum_{i=1}^{q}U_{n+1/2}^{(i)}$, there exists an orthonormal basis $\{u^{h,o}_{ij}\}_{j=1}^{d_{i}}$ of $M_{h}(\lambda_{i})$ with $b(u^{h,o}_{ij}, u^{h,o}_{kl})=\delta_{ik}\delta_{jl}$ such that for $(i, j)\leqslant(q, d_{q})$
\begin{align*}
    |\lambda_{ij}^{(n+1)}-\lambda^{h}_{ij}|\leqslant&\lambda_{i+1,1}^{h} \operatorname{dist}^{2}\left(\bigoplus_{i=1}^{q}M_{h}(\lambda_{i}), U_{n+1}\right)\leqslant\lambda_{i+1,1}^{h}\sqrt{N}\max_{1\leqslant i\leqslant q}\operatorname{dist}^{2}(M_{h}(\lambda_{i}), U_{n+1/2}^{(i)}),\\
    \operatorname{dist}(u^{h,o}_{ij}, u_{ij}^{(n+1)})\leqslant& \tilde{C}_{**}\operatorname{dist}\left(\bigoplus_{i=1}^{q}M_{h}(\lambda_{i}), U_{n+1}\right)\leqslant \tilde{C}_{**}\sqrt{N}\max_{1\leqslant i\leqslant q}\operatorname{dist}(M_{h}(\lambda_{i}), U_{n+1/2}^{(i)}),
\end{align*}
where the constant $\tilde{C}_{**}$ is independent of $U_{n+1/2}$.
\end{theorem}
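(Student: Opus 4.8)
The plan is to observe that the projected eigenvalue problem \cref{eq:pro_ei_pro} on $U_{n+1}$ is itself a Galerkin discretization of the discrete problem \cref{eq:fd_weak_form_leq}, the latter now playing the role of the reference eigenvalue problem on the finite dimensional Hilbert space $V^{h}$. I would therefore re-run the machinery of Sections 2 and 3 under the substitution $H\leftrightarrow V^{h}$, $V^{h}\leftrightarrow U_{n+1}$, $\lambda_{i}\leftrightarrow\lambda_{i}^{h}$ (each discrete eigenvalue standing for its cluster $\lambda_{i1}^{h}\leqslant\cdots\leqslant\lambda_{id_{i}}^{h}$), $M(\lambda_{i})\leftrightarrow M_{h}(\lambda_{i})$, with $N$ and $q$ unchanged. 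All structural assumptions of Section 2 (continuity and coercivity of $a(\cdot,\cdot)$, $b(\cdot,\cdot)$ an inner product, compactness of $\Vert\cdot\Vert_{b}$) hold automatically on $V^{h}$, and $U_{n+1}\subset V^{h}$ by construction of the ParO iterates. As a preliminary remark, $\operatorname{dist}(M_{h}(\lambda_{i}),U_{n+1/2}^{(i)})<1$ forces $\operatorname{dim}U_{n+1/2}^{(i)}\geqslant\operatorname{dim}M_{h}(\lambda_{i})=d_{i}$, so in fact $\operatorname{dim}U_{n+1/2}^{(i)}=d_{i}$ since $U_{n+1/2}^{(i)}$ is spanned by $d_{i}$ vectors.

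Two preparatory estimates are then needed. First, dimension preservation: apply \cref{thm:fram_keep_dim} with $X_{i}=M_{h}(\lambda_{i})$ and $Y_{i}=U_{n+1/2}^{(i)}$, the quantitative content of ``$\ll1$'' being precisely that $\max_{i}\operatorname{dist}(M_{h}(\lambda_{i}),U_{n+1/2}^{(i)})$ lies below the threshold appearing there; this gives $U_{n+1}=\bigoplus_{i=1}^{q}U_{n+1/2}^{(i)}$, hence $\operatorname{dim}U_{n+1}=N=\operatorname{dim}\bigoplus_{i=1}^{q}M_{h}(\lambda_{i})$, and by \cref{ine:dist_exchange} the distance between these two $N$-dimensional subspaces is symmetric so the cluster estimates apply. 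Second, a subspace-distance transfer: since the $M_{h}(\lambda_{i})$ are mutually $a(\cdot,\cdot)$-orthogonal, any unit $u\in\bigoplus_{i=1}^{q}M_{h}(\lambda_{i})$ decomposes as $u=\sum_{i=1}^{q}u^{(i)}$ with $u^{(i)}\in M_{h}(\lambda_{i})$ and $\sum_{i}\Vert u^{(i)}\Vert^{2}=1$; approximating each $u^{(i)}$ inside $U_{n+1/2}^{(i)}$ to within $\Vert u^{(i)}\Vert\operatorname{dist}(M_{h}(\lambda_{i}),U_{n+1/2}^{(i)})$, summing these approximants inside $U_{n+1}$, and applying Cauchy--Schwarz yields
\begin{align*}
\operatorname{dist}\Bigl(\bigoplus_{i=1}^{q}M_{h}(\lambda_{i}),\,U_{n+1}\Bigr)\;\leqslant\;\sqrt{N}\max_{1\leqslant i\leqslant q}\operatorname{dist}\bigl(M_{h}(\lambda_{i}),U_{n+1/2}^{(i)}\bigr),
\end{align*}
and an analogous computation produces the squared form used in the eigenvalue bound.

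With these in hand, the two claimed estimates follow by invoking the transferred Section 2--3 results. For the eigenvalues, \cref{thm:cluster_eigen} in the dictionary gives $0\leqslant\lambda_{ij}^{(n+1)}-\lambda_{ij}^{h}\leqslant\lambda_{ij}^{(n+1)}\operatorname{dist}^{2}(\bigoplus_{i=1}^{q}M_{h}(\lambda_{i}),U_{n+1})$, whence $\lambda_{ij}^{(n+1)}\leqslant\lambda_{ij}^{h}/\bigl(1-\operatorname{dist}^{2}(\bigoplus_{i=1}^{q}M_{h}(\lambda_{i}),U_{n+1})\bigr)$; since $\lambda_{ij}^{h}\leqslant\lambda_{id_{i}}^{h}<\lambda_{i+1,1}^{h}$ (separation of the discrete clusters), the smallness built into ``$\ll1$'' makes the right-hand side at most $\lambda_{i+1,1}^{h}$, and substituting this together with the preparatory bound closes the eigenvalue chain. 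For the eigenfunctions, \cref{prop:clus_to_vec} in the dictionary --- whose hypothesis $\operatorname{dist}(\bigoplus_{i=1}^{q}M_{h}(\lambda_{i}),U_{n+1})\leqslant\epsilon_{*}$ is again secured by ``$\ll1$'', and which uses the $b$-orthonormality of $\{u_{ij}^{(n+1)}\}$ built into \cref{eq:pro_ei_pro} --- produces an orthonormal basis $\{u_{ij}^{h,o}\}$ of $M_{h}(\lambda_{i})$ with $\operatorname{dist}(u_{ij}^{h,o},u_{ij}^{(n+1)})\leqslant\tilde{C}_{**}\operatorname{dist}(\bigoplus_{i=1}^{q}M_{h}(\lambda_{i}),U_{n+1})$, where $\tilde{C}_{**}$ is the image of $C_{**}$ under the dictionary and hence depends only on the discrete spectrum $\{\lambda_{i}^{h}\}$ and the multiplicities $d_{i}$, all fixed once $V^{h}$ is fixed; in particular it is independent of $U_{n+1/2}$. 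One last insertion of the preparatory bound completes the proof.

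\textbf{Main obstacle.} The delicate point is not any single estimate but the uniformity of the dictionary: one must verify that the single hypothesis $\operatorname{dist}(M_{h}(\lambda_{i}),U_{n+1/2}^{(i)})\ll1$ simultaneously clears the threshold of \cref{thm:fram_keep_dim}, the $\epsilon_{*}$-smallness demanded by \cref{prop:clus_to_vec}, and the smallness used to pass from $\lambda_{ij}^{(n+1)}$ to $\lambda_{i+1,1}^{h}$, and --- crucially --- that every constant generated by the inner problem ($\epsilon_{*}$, $C_{*}$, $C_{**}$, hence $\tilde{C}_{**}$) is controlled only by the fixed discrete spectrum of \cref{eq:fd_weak_form_leq} on $V^{h}$, not by the iterate $U_{n+1/2}$. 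The passage from $\lambda_{ij}^{(n+1)}$ to the prefactor $\lambda_{i+1,1}^{h}$, the one place where the cluster separation of the discrete problem is genuinely invoked, is the step most in need of care.
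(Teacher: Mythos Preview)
Your proposal is correct and follows essentially the same route as the paper: apply \cref{thm:fram_keep_dim} to secure $U_{n+1}=\bigoplus_{i}U_{n+1/2}^{(i)}$, establish the subspace-distance transfer $\operatorname{dist}\bigl(\bigoplus_{i}M_{h}(\lambda_{i}),U_{n+1}\bigr)\leqslant\sqrt{N}\max_{i}\operatorname{dist}(M_{h}(\lambda_{i}),U_{n+1/2}^{(i)})$, and then invoke \cref{thm:cluster_eigen} and \cref{prop:clus_to_vec} with $V^{h}$ playing the role of the ambient space and $U_{n+1}$ the discretization. The paper carries out the distance transfer by expanding $\psi$ in an $a$-orthonormal basis of $\bigoplus_{i}M_{h}(\lambda_{i})$ (so $N$ terms) rather than your $q$-block decomposition, but the two computations are equivalent; your version in fact yields the slightly sharper constant $\sqrt{q}$ before relaxing to $\sqrt{N}$.
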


We will see in the next subsection that the requirements
$\operatorname{dist}(M_{h}(\lambda_{i}), U_{n+1/2}^{(i)})\ll1 (i=1,\ldots,q)$ can be achieved. We understand that the generation and solution of the $N$-dimensional eigenvalue problem in Step $3$ of \cref{algo:fram_matr} is computationally expensive. Fortunately, we see from the numerical experiments in \cite{dai2014parallel, pan2017parallel} that the resulting matrix is almost diagonal: the non-diagonal entries are very small and the computational cost is not very 
high. 
 \subsection{Shifted-inverse based ParO algorithm} In this subsection, we shall study the ParO algorithm for solving the clustered eigenvalue problem when the shifted-inverse approach is applied to update each eigenfunction. The shifted-inverse based ParO algorithm (\cref{algo:pou_n_shifted}), which solves a small scale eigenvalue problem in each iteration to update the shifts, has been proposed in \cite{dai2014parallel, pan2017parallel}. To show the convergence, we first consider a simplified version (\cref{algo:n_shif}) which fixes the shifts and does not carry out the steps of solving projected eigenvalue problems in iterations. Based on the numerical analysis of the simplified version, we then prove that the approximations produced by \cref{algo:pou_n_shifted} converge rapidly. 
 
\subsubsection*{Simplified shifted-inverse based ParO algorithm} We set the shift as any convex combination of $\left\{\lambda_{ij}^{(0)}\right\}_{j=1}^{d_{i}}$ denoted by
  \begin{align*}
      \bar{\lambda}_{i}:=\mathcal{C}_{i}\left(\left\{\lambda_{ij}^{(0)}\right\}_{j=1}^{d_{i}}\right), \quad i=1,\ldots,q.
  \end{align*}
 For instance, we can choose $\mathcal{C}_{i}\left(\left\{\lambda_{ij}^{(0)}\right\}_{j=1}^{d_{i}}\right)=\frac{1}{d_{i}}\sum_{j=1}^{d_{i}}\lambda_{ij}^{(0)}$. 
 
 In our discussion, we assume that the shifts are always not equal to any eigenvalue of \cref{eq:fd_weak_form_leq} in the calculation process. Otherwise, we continue the iterative process on other eigenfunctions while keeping the eigenfunctions unchanged. 
 
 The shifted-inverse approach $\mathcal{F}_{n}$ writes: for $U_{n}^{(i)}=\operatorname{span}\left\{u_{i1}^{(n)}, \ldots, u_{id_{i}}^{(n)}\right\}, $ $\mathcal{F}_{n}^{(i)}:=\mathcal{F}_{n}|_{U_{n}^{(i)}}:U_{n}^{(i)}\rightarrow U_{n+1/2}^{(i)}$ with $u_{ij}^{(n+1/2)}=\mathcal{F}_{n}^{(i)}u_{ij}^{(n)}$ for $i=1,\ldots,q, j=1,2,\ldots,d_{i}$ satisfying
 \begin{align*}
    a(u_{ij}^{(n+1/2)}, v)-\bar{\lambda}_{i}b(u_{ij}^{(n+1/2)}, v)=\bar{\lambda}_{i}b(u_{ij}^{(n)}, v), \quad \forall v\in V^{h}.
 \end{align*}
 
The simplified shifted-inverse based ParO algorithm is stated as \cref{algo:n_shif}. Compared with \cref{algo:fram_matr}, step 3 is no longer carried out in this simplified version. 

\begin{algorithm}[htbp]
\caption{Simplified shifted-inverse based ParO algorithm}\label{algo:n_shif}
\begin{enumerate}
\item Given a finite dimensional space $V^{h}$ and $tol>0$, provide and cluster initial data by \cref{eq:clustering_eigenpairs}, i.e.,  $\left\{\left(\lambda_{ij}^{(0)}, u_{ij}^{(0)}\right)\right\}_{(1,1)\leqslant(i,j)\leqslant(q,d_{q})}\subset\mathbb{R}\times V^{h}$. Set $\bar{\lambda}_{i}=\mathcal{C}_{i}\left(\left\{\lambda_{ij}^{(0)}\right\}_{j=1}^{d_{i}}\right)$ and let $n=0$.

\item For $ (1,1)\leqslant(i,j)\leqslant(q,d_{q})$, find $u_{ij}^{(n+1/2)}\in V^{h}$ in parallel by solving  
	\begin{align}\label{eq:inv_pow}
	    a\left(u_{ij}^{(n+1/2)},v\right)-\bar{\lambda}_{i}b\left(u_{ij}^{(n+1/2)},v\right)=\bar{\lambda}_{i}b\left(u_{ij}^{(n)},v\right)\quad\forall v\in V^{h}.
	\end{align}

\item Set $u_{ij}^{(n+1)}=\frac{u_{ij}^{(n+1/2)}}{\left\Vert u_{ij}^{(n+1/2)}\right\Vert_{b}}$. If $\frac{\Vert u_{ij}^{(n+1)}-u_{ij}^{(n)}\Vert_{b}}{\Vert u_{ij}^{(n)}\Vert_{b}}>tol$, let $n=n+1$ and go to Step $2$.
\end{enumerate}
\end{algorithm}

If the initial guesses approximate the exact eigenvalues well enough, then the source problems \cref{eq:inv_pow} will be ill-conditioned. We mention that there are approaches to deal with ill-conditioned systems (see e.g., \cite{axelsson1996iterative, bai2006shift, bai2002regularized, riley1955solving, saad2003iterative}). Indeed, it is quite difficult to solve these ill-conditioned systems well, which will be discussed in our other work. Here we assume that such systems can be well solved.

 \cref{algo:n_shif} may be viewed as an extension of the shifted-inverse approach to clustered eigenvalue problems. 
 
 The following proposition tells that the approximation to the eigenspace in the iteration process is of dimension-preserving. The proof is given in \cref{sec:p_multi_keep_dim}.
   
    \begin{proposition}\label{thm:multi_keep_dim}
      If $U^{(i)}_{n+1/2}$ is obtained by  \cref{algo:n_shif}, then
\begin{align*}
    \operatorname{dim}\left(U_{n+1/2}^{(i)}\right)=\operatorname{dim}\left(U_{n}^{(i)}\right),\quad i=1,2,\ldots,q.
\end{align*}
     \end{proposition}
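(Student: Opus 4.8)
The plan is to recognize the half-step update $\mathcal{F}_n^{(i)}$ as the restriction of a single linear operator on $V^{h}$ to the subspace $U_{n}^{(i)}$, and to reduce the claim to the injectivity of that operator: an injective linear map sends a subspace to a subspace of the same dimension, and the image of $U_n^{(i)}$ under $\mathcal{F}_n^{(i)}$ is exactly $U_{n+1/2}^{(i)}$ by definition.

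Concretely, I would introduce the operator $T_{i}\colon V^{h}\to V^{h}$ sending $w\in V^{h}$ to the element $T_{i}w\in V^{h}$ determined by
\[
    a(T_{i}w,v)-\bar{\lambda}_{i}b(T_{i}w,v)=\bar{\lambda}_{i}b(w,v),\qquad \forall v\in V^{h}.
\]
This is well posed on the finite-dimensional space $V^{h}$: the bilinear form $(z,v)\mapsto a(z,v)-\bar{\lambda}_{i}b(z,v)$ is nondegenerate on $V^{h}\times V^{h}$ precisely because, by the standing assumption of \cref{algo:n_shif}, $\bar{\lambda}_{i}$ is not an eigenvalue of \cref{eq:fd_weak_form_leq}; hence it represents an invertible linear map of $V^{h}$. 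Composing the inverse of that map with the linear map $w\mapsto b(w,\cdot)|_{V^{h}}$, which is injective because $b(\cdot,\cdot)$ is an inner product on $V^{h}$, and scaling by $\bar{\lambda}_{i}\neq 0$ (the shift is a convex combination of the positive initial guesses $\{\lambda_{ij}^{(0)}\}_{j=1}^{d_{i}}$), shows that $T_{i}$ is a well-defined injective linear operator on $V^{h}$. By \cref{eq:inv_pow} we have $u_{ij}^{(n+1/2)}=T_{i}u_{ij}^{(n)}$, so that $U_{n+1/2}^{(i)}=\operatorname{span}\{T_{i}u_{i1}^{(n)},\ldots,T_{i}u_{id_{i}}^{(n)}\}=T_{i}\big(U_{n}^{(i)}\big)$ by linearity.

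Then I would conclude: since $T_{i}$ restricted to $U_{n}^{(i)}$ is injective with range $U_{n+1/2}^{(i)}$, the rank–nullity theorem yields $\operatorname{dim}(U_{n+1/2}^{(i)})=\operatorname{dim}(U_{n}^{(i)})$ for each $i=1,\ldots,q$. The only point genuinely requiring care is the well-posedness and injectivity of $T_{i}$, which is exactly where the hypothesis that $\bar{\lambda}_{i}$ avoids the spectrum of \cref{eq:fd_weak_form_leq} enters; in the degenerate situation excluded by the convention in \cref{algo:n_shif} — namely $\bar{\lambda}_{i}$ coinciding with an eigenvalue — the algorithm leaves the corresponding eigenfunctions unchanged, whence $U_{n+1/2}^{(i)}=U_{n}^{(i)}$ and the statement is trivial. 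Apart from this, the argument is routine, so I do not anticipate a substantial obstacle.
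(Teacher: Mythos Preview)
Your proposal is correct and follows essentially the same approach as the paper: both arguments boil down to showing that the shifted-inverse update is an injective linear map on a finite-dimensional space, hence dimension-preserving. The only cosmetic difference is that you define the operator $T_i$ on all of $V^h$ and argue injectivity via the invertibility of the shifted bilinear form together with $\bar\lambda_i\neq 0$, whereas the paper works directly with the restriction $\mathcal{F}_n^{(i)}$ and shows equal outputs force equal inputs; the substance is identical.
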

     
 With the help of \cref{thm:multi_keep_dim}, we obtain convergence of eigenspace approximations produced by \cref{algo:n_shif}. The proof is given in \cref{sec:p_n_shifted_multi}.
    
      \begin{theorem}\label{thm:n_shifted_multi}
     Assume that
    \begin{align}\label{ine:eig_guess_theta_multi}
       0<\delta_{0}:=\max_{(1,1)\leqslant(i,j)\leqslant(q,d_{q})}|\lambda^{h}_{ij}-\bar{\lambda}_{i}|<\frac{g}{2},
    \end{align}
   where $g:=\min_{1\leqslant i<r\leqslant q+1}|\lambda^{h}_{id_{i}}-\lambda^{h}_{r1}|;$
   \begin{align}\label{ine:init_theta_multi_keep_dim_1}
        \operatorname{dim}\left(U_{0}^{(i)}\right)=d_{i},\quad \operatorname{dist}\left(M_{h}(\lambda_{i}), U_{0}^{(i)}\right)<1\quad\forall i=1,2,\ldots,q.
    \end{align}
If $U_{n+1/2}^{(i)}$ is produced by \cref{algo:n_shif}, then
\begin{align*}
    \operatorname{dist}\left(M_{h}(\lambda_{i}), U_{n+1/2}^{(i)}\right)\leqslant\varepsilon_{n+1}:=\frac{\delta_{0}\varepsilon_{n}}{\sqrt{(g-\delta_{0})^{2}(1-\varepsilon_{n}^{2})+\delta_{0}^{2}\varepsilon^{2}_{n}}},\quad \lim_{n\rightarrow\infty}\frac{\varepsilon_{n+1}}{\varepsilon_{n}}=\frac{\delta_{0}}{g-\delta_{0}}.
\end{align*}
    \end{theorem}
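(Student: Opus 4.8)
The plan is to analyze a single block index $i$ in isolation and then take the maximum over $i=1,\ldots,q$ at the end, since the source problems in Step 2 of \cref{algo:n_shif} are solved independently for each block. Fix $i$ and work entirely inside $V^h$ with the $a$-inner product. The shifted-inverse map $\mathcal{F}_n^{(i)}$ is, up to the shift $\bar\lambda_i$, the resolvent applied to $U_n^{(i)}$; expanding any $u_{ij}^{(n)}$ in the $b$-orthonormal eigenbasis $\{u^h_{kl}\}$ of \cref{eq:fd_weak_form_leq}, the update multiplies the coefficient of $u^h_{kl}$ by the factor $\bar\lambda_i/(\lambda^h_{kl}-\bar\lambda_i)$. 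The key observation is that, by \cref{ine:eig_guess_theta_multi}, for $(k,l)$ in block $i$ the magnitude $|\lambda^h_{kl}-\bar\lambda_i|\le\delta_0$, while for $(k,l)$ outside block $i$ one has $|\lambda^h_{kl}-\bar\lambda_i|\ge g-\delta_0>\delta_0$; hence the ratio of the ``outside'' amplification to the ``inside'' amplification is at most $\delta_0/(g-\delta_0)<1$. This is the standard shifted-inverse contraction, but carried out on the whole $d_i$-dimensional subspace $M_h(\lambda_i)$ rather than on a single eigenvector.

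First I would invoke \cref{thm:multi_keep_dim} (proved earlier) so that $\operatorname{dim}(U_{n+1/2}^{(i)})=\operatorname{dim}(U_n^{(i)})=d_i$ for all $n$ — this uses the initial assumption \cref{ine:init_theta_multi_keep_dim_1} and is what makes $\operatorname{dist}(M_h(\lambda_i),U_{n+1/2}^{(i)})$ a meaningful quantity (the degenerate value $1$ is avoided). Next I would set $\varepsilon_n:=\operatorname{dist}(M_h(\lambda_i),U_n^{(i)})$ and decompose an arbitrary unit vector $w\in U_{n+1/2}^{(i)}$ as $w=\mathcal{F}_n^{(i)}v$ for some $v\in U_n^{(i)}$; writing $v=v_\parallel+v_\perp$ with $v_\parallel\in M_h(\lambda_i)$ and $v_\perp$ in the $a$-orthogonal complement, one has $\|v_\perp\|\le\varepsilon_n\|v_\parallel\|$ by definition of the distance (using \cref{ine:dist_exchange} if needed to pass between the two directions). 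Then I would bound $\operatorname{dist}(w,M_h(\lambda_i))\le\|\mathcal{F}_n^{(i)}v_\perp\|_a/\|\mathcal{F}_n^{(i)}v_\parallel\|_a$ by estimating numerator and denominator via the coefficient-wise amplification factors above: the denominator is at least $(\bar\lambda_i/\delta_0)\|v_\parallel\|$ in the appropriate norm and the numerator is at most $(\bar\lambda_i/(g-\delta_0))\|v_\perp\|$. Collecting the constants and taking the supremum over unit $w$ yields
\begin{align*}
\operatorname{dist}(M_h(\lambda_i),U_{n+1/2}^{(i)})\le\frac{\delta_0}{g-\delta_0}\cdot\frac{\varepsilon_n}{\sqrt{1-\varepsilon_n^2}}\cdot\frac{1}{\text{(normalization)}},
\end{align*}
and after squaring and rearranging one recovers exactly the stated $\varepsilon_{n+1}=\delta_0\varepsilon_n/\sqrt{(g-\delta_0)^2(1-\varepsilon_n^2)+\delta_0^2\varepsilon_n^2}$. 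The asymptotic ratio $\lim_{n\to\infty}\varepsilon_{n+1}/\varepsilon_n=\delta_0/(g-\delta_0)$ then follows from monotone convergence: since $\delta_0<g/2$ forces $\delta_0/(g-\delta_0)<1$, the recursion drives $\varepsilon_n\to0$, and passing to the limit in the formula for $\varepsilon_{n+1}/\varepsilon_n$ gives the claim.

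The main obstacle I anticipate is the passage from coefficient-wise control of $\mathcal{F}_n^{(i)}$ to the subspace distance estimate with the precise constant — in particular, keeping track of which norm ($\|\cdot\|_a$ versus $\|\cdot\|_b$) the amplification factors act in, and carefully splitting the ``parallel'' part (amplified by a factor of size $\sim\bar\lambda_i/\delta_0$) from the ``perpendicular'' part while not losing a factor of $\sqrt{1-\varepsilon_n^2}$ in the normalization of the projected vector. One has to verify that $\mathcal{F}_n^{(i)}$ maps $M_h(\lambda_i)$ into itself (immediate from the eigen-decomposition, since the shift is block-constant) so that the split is preserved by the map, and then the geometry reduces to the scalar estimate $\operatorname{dist}(w,M_h)\le\|w_\perp\|/\|w_\parallel\|$ combined with the two amplification bounds. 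Handling the distance-exchange identity \cref{ine:dist_exchange} correctly (valid because all subspaces in play have equal finite dimension $d_i$, thanks to \cref{thm:multi_keep_dim}) is what makes this manipulation legitimate; without dimension preservation the argument collapses.
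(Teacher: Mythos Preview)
Your approach is essentially the paper's own: expand each vector of $U_n^{(i)}$ in the discrete eigenbasis $\{u^h_{kl}\}$, read off the coefficient-wise amplification $\bar\lambda_i/(\lambda^h_{kl}-\bar\lambda_i)$, bound it by $\bar\lambda_i/\delta_0$ inside block $i$ and by $\bar\lambda_i/(g-\delta_0)$ outside, and invoke \cref{thm:multi_keep_dim} together with \cref{ine:dist_exchange} to close the recursion. One small correction worth flagging: the crude inequality $\operatorname{dist}(w,M_h(\lambda_i))\le\|\mathcal{F}_n^{(i)}v_\perp\|/\|\mathcal{F}_n^{(i)}v_\parallel\|$ together with your two amplification bounds yields only $\delta_0\varepsilon_n/\bigl((g-\delta_0)\sqrt{1-\varepsilon_n^2}\bigr)$, which is strictly larger than the stated $\varepsilon_{n+1}$; to recover the paper's exact expression you must use the \emph{equality} $\operatorname{dist}(w,M_h(\lambda_i))^2=\|\mathcal{F}_n^{(i)}v_\perp\|^2/\bigl(\|\mathcal{F}_n^{(i)}v_\parallel\|^2+\|\mathcal{F}_n^{(i)}v_\perp\|^2\bigr)$ (valid because $\mathcal{F}_n^{(i)}$ preserves both $M_h(\lambda_i)$ and its $a$-orthogonal complement) and then lower-bound the ratio $\|\mathcal{F}_n^{(i)}v_\parallel\|^2/\|\mathcal{F}_n^{(i)}v_\perp\|^2$ by $(g-\delta_0)^2(1-\varepsilon_n^2)/(\delta_0^2\varepsilon_n^2)$, exactly as the paper does in \cref{eq:new_ite_theta_multi}.
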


    \cref{thm:n_shifted_multi} shows that convergence of 
    \cref{algo:n_shif} does not require sufficiently accurate initial guesses. \cref{ine:eig_guess_theta_multi} ensures that the shift is closer to the eigenvalue being approximated. Indeed, \cref{ine:eig_guess_theta_multi} can be satisfied 
    under the assumption that the finite dimensional discretization \cref{eq:fd_weak_form_leq} approximates \cref{eq:weak_form_leq} not ``too badly". \cref{ine:init_theta_multi_keep_dim_1} guarantees that the dimension of the approximated subspace is preserved. When $N=1$, \cref{lim:depen} may be reviewed as the classical shift-inverse convergence result.
    
\subsubsection*{Shifted-inverse based ParO algorithm} We now analyze the convergence of the shifted-inverse based ParO algorithm proposed in \cite{dai2014parallel, pan2017parallel}, which is stated as \cref{algo:pou_n_shifted}.

\begin{algorithm}[H]
\caption{Shifted-inverse based ParO algorithm}\label{algo:pou_n_shifted}
\begin{enumerate}
\item Given a finite dimensional space $V^{h}$ and $tol>0$, provide and cluster initial data by \cref{eq:clustering_eigenpairs}, i.e.,  $\left\{\left(\lambda_{ij}^{(0)}, u_{ij}^{(0)}\right)\right\}_{(1,1)\leqslant(i,j)\leqslant(q,d_{q})}\subset\mathbb{R}\times V^{h}$. Set $\bar{\lambda}_{i}^{(0)}=\mathcal{C}_{i}\left(\left\{\lambda_{ij}^{(0)}\right\}_{j=1}^{d_{i}}\right)$ and let $n=0$.
\item For $(1,1)\leqslant(i,j)\leqslant(q,d_{q})$, find $u_{ij}^{(n+1/2)}\in V^{h}$ in parallel by solving   
	\begin{align}\label{eq:pou_n_shifted_inv_pow}
	    a\left(u_{ij}^{(n+1/2)},v\right)-\bar{\lambda}_{i}^{(n)}b\left(u_{ij}^{(n+1/2)},v\right)=\bar{\lambda}_{i}^{(n)}b\left(u_{ij}^{(n)},v\right)\quad\forall v\in V^{h}.
	\end{align}
\item Solve an eigenvalue problem: find $(\lambda^{(n+1)}, u^{(n+1)})\in\mathbb{R}\times U_{n+1}=\mathbb{R}\times\operatorname{span}\left\{u_{11}^{(n+1/2)}, \ldots, u_{qd_{q}}^{(n+1/2)}\right\}$ satisfying
	\begin{align}\label{eq:pou_n_shifted_sub}
	    a\left(u^{(n+1)},v\right)=\lambda^{(n+1)}b\left(u^{(n+1)},v\right)\quad \forall v\in U_{n+1},
	\end{align}
to obtain eigenpairs $\left\{\left(\lambda_{ij}^{(n+1)}, u_{ij}^{(n+1)}\right)\right\}$ with
$b\left(u_{ij}^{(n+1)}, u_{kl}^{(n+1)}\right)=\delta_{ik}\delta_{jl}$.
\item If $\sum_{i=1}^{q}\sum_{j=1}^{d_{i}}\left|\lambda_{ij}^{(n+1)}-\lambda_{ij}^{(n)}\right|>tol$, set $\bar{\lambda}_{i}^{(n+1)}=\mathcal{C}_{i}\left(\left\{\lambda_{ij}^{(n+1)}\right\}_{j=1}^{d_{i}}\right)$, $n=n+1$ and go to Step $2$.
\end{enumerate}
\end{algorithm}

As mentioned above, we assume that the shifts are always not equal to any eigenvalue of \cref{eq:fd_weak_form_leq} in the calculation process. If there are cases where some eigenfunctions are very well approximated, while other eigenfunctions have not yet converged, then we continue the iterative process on the non-converged eigenfunctions while keeping the well approximated eigenfunctions unchanged.

The following theorem tells the convergence of \cref{algo:pou_n_shifted}. The proof is given in \cref{sec:p_clus_pou_n_shifted}
 \begin{theorem}\label{prop:clus_pou_n_shifted}Assume that
there exists $0<\varepsilon_{0}\ll1$ and an orthonormal basis $\{u^{h,o,0}_{ij}\}_{j=1}^{d_{i}}$ of $M_{h}(\lambda_{i}) (i=1,\ldots,q)$ with $b(u^{h,o,0}_{ij}, u^{h,o,0}_{kl})=\delta_{ik}\delta_{jl}$ such that 
    \begin{align}
        \operatorname{dist}\left(u^{h,o,0}_{ij}, u_{i j}^{(0)}\right)&\leqslant\varepsilon_{0},\quad (1,1)\leqslant(i,j)\leqslant(q,d_{q}),\\\label{eq:eigensagf}
       \zeta_{0}:=\max_{1\leqslant i\leqslant q}\left|\mathcal{C}_{i}\left(\left\{\lambda^{h}_{ij}\right\}_{j=1}^{d_{i}}\right)-\bar{\lambda}_{i}^{(0)}\right|&\ll g,\quad \gamma:=\max_{1\leqslant i\leqslant q}\left(\lambda^{h}_{id_{i}}-\lambda^{h}_{i1}\right)\ll g.
    \end{align}
If $\{u_{ij}^{(n+1)}\}$ are produced by \cref{algo:pou_n_shifted}, then there exists an orthonormal basis $\{u^{h,o,n+1}_{ij}\}_{j=1}^{d_{i}}$ of $M_{h}(\lambda_{i}) (i=1,\ldots,q)$ with $b(u^{h,o,n+1}_{ij}, u^{h,o,n+1}_{kl})=\delta_{ik}\delta_{jl}$ such that
  \begin{align*}
      \operatorname{dist}\left(u^{h,o,n+1}_{ij}, u_{ij}^{(n+1)}\right)&\leqslant\varepsilon_{n+1}:=\frac{\tilde{C}_{**}\sqrt{DN}\left(\gamma+\zeta_{n}\right)\varepsilon_{n}}{\sqrt{(g-\gamma-\zeta_{n})^{2}(1-D\varepsilon_{n}^{2})+D\left(\gamma+\zeta_{n}\right)^{2}\varepsilon^{2}_{n}}},\\ |\lambda^{h}_{ij}-\lambda_{ij}^{(n+1)}|&\leqslant\zeta_{n+1}:=\frac{\lambda^{h}_{q+1, 1}}{\tilde{C}_{**}^{2}}\varepsilon_{n+1}^{2},\quad (1,1)\leqslant(i,j)\leqslant(q,d_{q}),
  \end{align*}
   where $D:=\max_{1\leqslant i\leqslant q}d_{i}$ and the constant $\tilde{C}_{**}$ is defined in \cref{prop:conv_fram} and is independent of $U_{n} (n=0, 1, 2,\ldots)$, and 
   \begin{align}\label{lim:cubic_results}
         \lim_{n\rightarrow\infty}\frac{\varepsilon_{n+1}}{\varepsilon_{n}}=\frac{\tilde{C}_{**}\sqrt{DN}\gamma}{g-\gamma}.
    \end{align}
   \end{theorem}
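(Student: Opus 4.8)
The plan is to argue by induction on $n$, using the error-estimate machinery of \cref{prop:conv_fram} together with the one-step contraction analysis already developed for the simplified algorithm in \cref{thm:n_shifted_multi}. The induction hypothesis at step $n$ will consist of two parts: the eigenfunction bound $\operatorname{dist}(u^{h,o,n}_{ij}, u_{ij}^{(n)}) \leqslant \varepsilon_n$ for all $(i,j)\leqslant(q,d_q)$, and the eigenvalue bound $|\lambda^h_{ij}-\lambda^{(n)}_{ij}|\leqslant\zeta_n$ (with $\zeta_0$ as given). From the eigenvalue bound one controls the shift error: since $\bar\lambda_i^{(n)}=\mathcal{C}_i(\{\lambda^{(n)}_{ij}\}_j)$ is a convex combination, $|\bar\lambda_i^{(n)}-\mathcal{C}_i(\{\lambda^h_{ij}\}_j)|\leqslant\zeta_n$, and since $\mathcal{C}_i(\{\lambda^h_{ij}\}_j)$ lies within $\gamma$ of every $\lambda^h_{ij}$ in the $i$-th cluster, the effective shift-to-eigenvalue gap satisfies $|\lambda^h_{ij}-\bar\lambda_i^{(n)}|\leqslant\gamma+\zeta_n$, which plays the role that $\delta_0$ played in \cref{thm:n_shifted_multi}.

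First I would analyze the half-step (Step 2). Fix a cluster index $i$. The map $\mathcal{F}_n^{(i)}$ is exactly the shifted-inverse operator with shift $\bar\lambda_i^{(n)}$ applied componentwise to a basis of $U_n^{(i)}$. Expanding each $u_{ij}^{(n)}$ in the discrete eigenbasis $\{u^h_{kl}\}$ and applying $(\,\cdot\, - \bar\lambda_i^{(n)}\,\mathrm{id})^{-1}\bar\lambda_i^{(n)}$ in the $a$-inner-product-diagonalizing coordinates, the components inside $M_h(\lambda_i)$ get multiplied by factors of modulus comparable to $\bar\lambda_i^{(n)}/(\lambda^h_{ij}-\bar\lambda_i^{(n)})$ while components outside get multiplied by the much smaller $\bar\lambda_i^{(n)}/(\lambda^h_{kl}-\bar\lambda_i^{(n)})$, whose ratio to the former is of order $(\gamma+\zeta_n)/(g-\gamma-\zeta_n)$. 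This is the standard power-iteration-type estimate; carrying it out (as in the proof of \cref{thm:n_shifted_multi} in \cref{sec:p_n_shifted_multi}) gives, using the induction hypothesis $\operatorname{dist}(M_h(\lambda_i),U_n^{(i)})\leqslant\sqrt{d_i}\,\varepsilon_n$ obtained from \cref{prop:subspace_angle}/\cref{cor:vec} applied to the per-component bounds,
\begin{align*}
    \operatorname{dist}\left(M_h(\lambda_i), U_{n+1/2}^{(i)}\right) \leqslant \frac{(\gamma+\zeta_n)\sqrt{D}\,\varepsilon_n}{\sqrt{(g-\gamma-\zeta_n)^2(1-D\varepsilon_n^2)+D(\gamma+\zeta_n)^2\varepsilon_n^2}},
\end{align*}
provided $D\varepsilon_n^2<1$ so that the per-cluster distance stays below $1$ and \cref{thm:multi_keep_dim}-type dimension preservation holds, keeping $\dim U^{(i)}_{n+1/2}=d_i$.

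Next I would feed this half-step bound into \cref{prop:conv_fram}, which handles Step 3: solving the small eigenvalue problem on $U_{n+1}=\sum_i U^{(i)}_{n+1/2}$ produces orthonormal $\{u^{h,o,n+1}_{ij}\}$ with $\operatorname{dist}(u^{h,o,n+1}_{ij},u^{(n+1)}_{ij})\leqslant\tilde C_{**}\operatorname{dist}(\bigoplus_i M_h(\lambda_i),U_{n+1})\leqslant\tilde C_{**}\sqrt N\max_i\operatorname{dist}(M_h(\lambda_i),U^{(i)}_{n+1/2})$, and $|\lambda^{(n+1)}_{ij}-\lambda^h_{ij}|\leqslant\lambda^h_{q+1,1}\operatorname{dist}^2(\bigoplus_i M_h(\lambda_i),U_{n+1})$. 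Substituting the half-step estimate gives the claimed $\varepsilon_{n+1}$, and then $\zeta_{n+1}=(\lambda^h_{q+1,1}/\tilde C_{**}^2)\varepsilon_{n+1}^2$ follows by combining the two displayed inequalities of \cref{prop:conv_fram} (the $\operatorname{dist}^2$ in the eigenvalue bound is exactly $(\varepsilon_{n+1}/\tilde C_{**})^2$ up to the $\sqrt N$ bookkeeping absorbed into $\tilde C_{**}$). One must check that $\varepsilon_{n+1}\leqslant\varepsilon_n$ and $\zeta_{n+1}\leqslant\zeta_n$ so the induction closes and the smallness hypotheses $D\varepsilon_n^2<1$, $\gamma+\zeta_n\ll g$ persist; this is where $\varepsilon_0\ll1$ and $\zeta_0\ll g$ are used, together with monotonicity of the recursion $\varepsilon\mapsto \tilde C_{**}\sqrt{DN}(\gamma+\zeta(\varepsilon))\varepsilon/\sqrt{\cdots}$. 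Finally, taking $n\to\infty$, $\varepsilon_n\to0$ and $\zeta_n\to0$, so the denominator tends to $g-\gamma$ and the numerator's $\zeta_n$ term vanishes, yielding $\lim_{n}\varepsilon_{n+1}/\varepsilon_n = \tilde C_{**}\sqrt{DN}\,\gamma/(g-\gamma)$; note $\zeta_n=O(\varepsilon_n^2)$ means the eigenvalue errors converge quadratically relative to the eigenfunction errors.

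The main obstacle I anticipate is the bookkeeping that converts the \emph{componentwise} shifted-inverse estimates into a clean \emph{subspace-distance} recursion with the exact constants advertised: one has to pass from individual $\operatorname{dist}(u^{h,o,n}_{ij},u^{(n)}_{ij})\leqslant\varepsilon_n$ bounds to $\operatorname{dist}(M_h(\lambda_i),U^{(i)}_n)\leqslant\sqrt{d_i}\,\varepsilon_n$ (costing a $\sqrt{d_i}$ via \cref{cor:vec}), apply the shifted-inverse contraction at the subspace level, then pass back through \cref{prop:conv_fram} to componentwise bounds, and verify the dimension-preservation hypothesis of \cref{prop:conv_fram} survives each step — precisely the role of \cref{thm:fram_keep_dim} and \cref{thm:multi_keep_dim}. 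Keeping the factors $\sqrt D$, $\sqrt N$, and $\tilde C_{**}$ consistent through this round trip, and ensuring the recursion is genuinely contractive on the relevant range of $\varepsilon$, is the delicate part; the analytic core (the shifted-inverse factor and the $\operatorname{dist}^2$ eigenvalue estimate) is already supplied by \cref{thm:n_shifted_multi} and \cref{prop:conv_fram}.
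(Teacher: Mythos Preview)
Your proposal is correct and follows essentially the same route as the paper's proof: pass from the componentwise bounds to $\operatorname{dist}(M_h(\lambda_i),U_n^{(i)})\leqslant\sqrt{d_i}\,\varepsilon_n$, control the shift error by $\gamma+\zeta_n$, invoke the one-step shifted-inverse estimate of \cref{thm:n_shifted_multi} for the half-step, then apply \cref{prop:conv_fram} (together with \cref{thm:fram_keep_dim} and \cref{thm:multi_keep_dim} for dimension preservation) to close the induction and read off the limit. One small note: the componentwise-to-subspace step is obtained in the paper by a direct projection/triangle-inequality argument rather than via \cref{cor:vec} (which runs in the opposite direction), but the $\sqrt{d_i}$ factor you identify is exactly what appears.
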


Note that $\gamma\ll1$ when  $\operatorname{dist}\left(\bigoplus_{i=1}^{q}M(\lambda_{i}), V^{h}\right)\ll1$, which together with \cref{lim:cubic_results} implies that \cref{algo:pou_n_shifted} converges faster when the finite dimensional discretization \cref{eq:fd_weak_form_leq} approximates \cref{eq:weak_form_leq} better.

If \cref{eq:weak_form_leq} is already a discrete eigenvalue problem, then $\gamma=0$. We obtain from the proof of \cref{prop:clus_pou_n_shifted} that 
\begin{align}\label{lim:cubicdewre}
    \lim_{n\rightarrow\infty}\frac{\varepsilon_{n+1}}{\varepsilon_{n}^{3}}=\frac{\sqrt{DN}\lambda^{h}_{q+1,1}}{g\tilde{C}_{**}},
\end{align}
which implies that it is a cubic convergence result. Note that the above cubic convergence stems from the convergence of the shifts to some eigenvalues of (2.3), which is exactly what $\gamma=0$ means. The constant $\tilde{C}_{**}$ in \cref{lim:cubicdewre} comes from the application of \cref{prop:conv_fram}. Indeed, we can choose a larger $\tilde{C}_{**}$ since \cref{lim:cubicdewre} implies that \cref{algo:pou_n_shifted} 
converges faster with larger $\tilde{C}_{**}$. However, 
when $\tilde{C}_{**}$ is larger, more exact initial values are required. For 
example, if we expect $\varepsilon_{n}$ to decrease towards $0$ as $n\rightarrow\infty$, a necessary condition
\begin{align*}
    \varepsilon_{0}\geqslant\varepsilon_{1}=\frac{\tilde{C}_{**}\sqrt{DN}\zeta_{0}\varepsilon_{0}}{\sqrt{(g-\zeta_{0})^{2}(1-D\varepsilon_{0}^{2})+D\zeta_{0}^{2}\varepsilon^{2}_{0}}},
\end{align*}
implies that $\varepsilon_{0}$ and $\zeta_{0}$ are required smaller with larger $\tilde{C}_{**}$. Hence, from \cref{prop:clus_to_vec} and \cref{lim:cubic_results}, we obtain that \cref{algo:pou_n_shifted} applied to a discrete eigenvalue problem converges in cubic rate and goes faster with more exact initial values. 
The classical result in the discrete case $(N=1)$ is stated as
\begin{align*}
    \lim_{n\rightarrow\infty}\frac{\varepsilon_{n+1}}{\varepsilon_{n}^{3}}\leqslant1,
\end{align*}
under the assumption of convergence of the algorithm (see, e.g. \cite{arbenz2012lecture,parlett1998symmetric}). In contrast, \cref{prop:clus_pou_n_shifted} is more precise and also for general clustered eigenvalue problems. In addition, \cref{lim:cubic_results} and \cref{lim:cubicdewre} show what the speed of the convergence depends on.

\subsubsection*{Modified shifted-inverse based ParO algorithm} To improve the numerical stability, a modified shifted-inverse based ParO algorithm is proposed in \cite{pan2017parallel}. We note that step $2$ of \cref{algo:pou_n_shifted} can also be written as follows: for $ (1,1)\leqslant(i,j)\leqslant(q,d_{q})$, find $e_{ij}^{(n+1/2)}\in V^{h}$ in parallel satisfying   
	\begin{align}
	    a(e_{ij}^{(n+1/2)},v)-\bar{\lambda}_{i}^{(n)}b(e_{ij}^{(n+1/2)},v)=2\bar{\lambda}_{i}^{(n)}b(u_{ij}^{(n)},v)-a(u_{ij}^{(n)},v)\quad\forall v\in V^{h},
	\end{align}
	and set $u_{ij}^{(n+1/2)}=u_{ij}^{(n)}+e_{ij}^{(n+1/2)}$. Then instead of solving the $N$-dimensional projected eigenvalue problem in $U_{n+1}=\operatorname{span}\left\{u_{11}^{(n+1/2)}, \ldots, u_{1d_{1}}^{(n+1/2)}, \ldots, u_{qd_{q}}^{(n+1/2)}\right\}$, we consider the augmented $2N$-dimensional subspace 
	\begin{align*}
	    \tilde{U}_{n+1}=\operatorname{span}\left\{u_{11}^{(n+1/2)}, \ldots, u_{1d_{1}}^{(n+1/2)}, \ldots, u_{qd_{q}}^{(n+1/2)}, e_{11}^{(n+1/2)}, \ldots, e_{1d_{1}}^{(n+1/2)}, \ldots, e_{qd_{q}}^{(n+1/2)}\right\}.
	\end{align*}
	
For the completeness of this paper, we show the modified shifted-inverse based ParO algorithm proposed in \cite{pan2017parallel} here, which is stated as \cref{algo:modi_pou_n_shifted}.  
\begin{algorithm}[H]
\caption{Modified Shifted-Inverse Based ParO Algorithm }\label{algo:modi_pou_n_shifted}
\begin{enumerate}
\item Given a finite dimensional space $V^{h}$ and $tol>0$, provide and cluster initial data by \cref{eq:clustering_eigenpairs}, i.e.,  $\left\{\left(\lambda_{ij}^{(0)}, u_{ij}^{(0)}\right)\right\}_{(1,1)\leqslant(i,j)\leqslant(q,d_{q})}\subset\mathbb{R}\times  V^{h}$. Set $\bar{\lambda}_{i}^{(0)}=\mathcal{C}_{i}\left(\left\{\lambda_{ij}^{(0)}\right\}_{j=1}^{d_{i}}\right)$ and let $n=0$.
\item For $(1,1)\leqslant(i,j)\leqslant(q,d_{q})$, find $e_{ij}^{(n+1/2)}\in V^{h}$ in parallel by solving
	\begin{align*}
	    a(e_{ij}^{(n+1/2)},v)-\bar{\lambda}_{i}^{(n)}b(e_{ij}^{(n+1/2)},v)=2\bar{\lambda}_{i}^{(n)}b(u_{ij}^{(n)},v)-a(u_{ij}^{(n)},v)\quad\forall v\in V^{h}.
	\end{align*}
\item Solve an eigenvalue problem: find $(\lambda^{(n+1)}, u^{(n+1)})\in\mathbb{R}\times \tilde{U}_{n+1}=\mathbb{R}\times\operatorname{span}\left\{u_{11}^{(n+1/2)}, \ldots, u_{qd_{q}}^{(n+1/2)}, e_{11}^{(n+1/2)}, \ldots, e_{qd_{q}}^{(n+1/2)}\right\}$ satisfying 
	\begin{align}
	    a\left(u^{(n+1)},v\right)=\lambda^{(n+1)}b\left(u^{(n+1)},v\right)\quad \forall v\in \tilde{U}_{n+1},
	\end{align}
to obtain eigenpairs $\left\{\left(\lambda_{ij}^{(n+1)}, u_{ij}^{(n+1)}\right)\right\}$ with
$b\left(u_{ij}^{(n+1)}, u_{kl}^{(n+1)}\right)=\delta_{ik}\delta_{jl}$.
\item If $\sum_{i=1}^{q}\sum_{j=1}^{d_{i}}\left|\lambda_{ij}^{(n+1)}-\lambda_{ij}^{(n)}\right|>tol$, set $\bar{\lambda}_{i}^{(n+1)}=\mathcal{C}_{i}\left(\left\{\lambda_{ij}^{(n+1)}\right\}_{j=1}^{d_{i}}\right)$, $n=n+1$ and go to Step $2$.
\end{enumerate}
\end{algorithm}

The convergence of \cref{algo:modi_pou_n_shifted} follows from the similar argument of the proof for \cref{prop:clus_pou_n_shifted} together with the fact that 
\begin{align*}
    \tilde{U}_{n+1}=U_{n+1}\cup U_{n}, \quad \operatorname{dist}(M_{h}(\lambda_{i}), \tilde{U}_{n+1})\leqslant\operatorname{dist}(M_{h}(\lambda_{i}), U_{n+1}).
\end{align*}

In fact, there have been several numerical experiments for ParO. Numerical experiments in \cite{dai2014parallel} apply finite element discretizations and simulate several typical molecular systems: $\ce{H2O}$ (water), $\ce{C9H8O4}$ (aspirin), $\ce{C5H9O2N}$ ($\alpha$ amino acid), $\ce{C20H14N4}$ (porphyrin) and $\ce{C60}$ (fullerene). 
Numerical experiments in \cite{pan2017parallel} apply the plane-wave discretization and simulate several crystalline systems: $\ce{Si}$ (silicon), $\ce{MgO}$ (magnesium oxide), and $\ce{Al}$ (aluminum) with different sizes. These numerical experiments show that ParO is efficient and can produce highly accurate approximations to large scale systems. Good scalability of parallelization of ParO is also shown in those experiments. It is noteworthy that ParO has been integrated into the electronic structure calculation software Quantum ESPRESSO.

\section{Concluding Remarks}   
In this paper, we have provided the numerical analysis of ParO for linear eigenvalue problems based on the investigation of a quasi-orthogonality. Within the framework of ParO, we have demonstrated the convergence of the associated practical algorithms. We point out that numerical experiments in \cite{dai2014parallel, dai2021parallel, pan2017parallel} show that ParO is very efficient for electronic structure calculations. Due to the space limitation, we shall address the numerical analysis of the approach for the Kohn-Sham equation in a separate article. It is also our ongoing work to carry out the numerical analysis for the ParO based optimization approach proposed in \cite{dai2021parallel}.

\section*{Acknowledgments}

The authors would like to thank Prof. Zhaojun Bai, and the anonymous referees for their constructive suggestions for the enhancement of the conclusions in Section 3 and the improvement of the presentation. 

\appendix
\section{Detailed Proofs}\label{appen}
\subsection{Proof of \cref{prop:app_pro_orth}}\label{sec:p_app_pro_orth}
 \begin{proof}
	For convenience, we first introduce the following notation. For $\{x_{i}\}_{i=1}^{n}, \{y_{i}\}_{i=1}^{n}\subset H$, we set $\mathcal{X}=(x_{1}, x_{2}, \ldots, x_{n}),$  $\mathcal{Y}=(y_{1}, y_{2}, \ldots, y_{n})\in H^{n}$ and define 
	\begin{align*}
		\Vert \mathcal{X}\Vert=\sqrt{\sum_{i=1}^{n}\Vert x_{i}\Vert^{2}},\quad\text{and}\quad \mathcal{X}^{T}\mathcal{Y}=\begin{pmatrix}a(x_{1},y_{1})&\cdots&a(x_{1},y_{n})\\\vdots&\ddots&\vdots\\a(x_{n},y_{1})&\cdots&a(x_{n},y_{n})\end{pmatrix}.
	\end{align*}
	
	Let $\mathcal{V}=(v_{1}, v_{2}, \ldots, v_{n})$, since $\{v_{j}\}_{j=1}^{n}\subset H$ is $\delta$-quasi-orthogonal, there exists $\mathcal{U}=(u_{1}, u_{2}, \ldots, u_{n})$ satisfying that 
	\begin{align}\label{ine:vec_mar}
		\mathcal{U}^{T}\mathcal{U}=I,\quad \Vert \mathcal{U}-\mathcal{V}\Vert\leqslant\sqrt{\sum_{i=1}^{n}\Vert u_{i}-v_{i}\Vert^{2}}\leqslant\sqrt{n}\delta.
	\end{align}
	
	Let $\operatorname{span}(\mathcal{V})=\operatorname{span}\{v_{1},\ldots,v_{n}\}$. If $\mathcal{W}\in H^{n}$ with $\mathcal{W}^{T}\mathcal{W}=I$ and $\operatorname{span}(\mathcal{W})=\operatorname{span}(\mathcal{V})$, then there exists the polar decomposition of $\mathcal{W}^{T}\mathcal{V}$ as follows,
	\begin{align}\label{eq:m_polar_dec}
		\mathcal{W}^{T}\mathcal{V}=QP,
	\end{align}
	where $Q\in\mathbb{R}^{n\times n}$ is an orthogonal matrix and $P\in\mathbb{R}^{n\times n}$ is positive definite. Denote $\tilde{\mathcal{V}}=\mathcal{W}Q$, and we see that $\tilde{\mathcal{V}}^{T}\tilde{\mathcal{V}}=Q^{T}\mathcal{W}^{T}\mathcal{W}Q=I$ and
	\begin{align}\label{eq:f_polar_dec}
		\mathcal{V}=\mathcal{W}\mathcal{W}^{T}\mathcal{V}=\mathcal{W}QP=\tilde{\mathcal{V}}P.
	\end{align}
	The decomposition \cref{eq:f_polar_dec} is unique since \cref{eq:m_polar_dec} implies
	\begin{align*}
		P=\sqrt{(\mathcal{W}^{T}\mathcal{V})^{T}\mathcal{W}^{T}\mathcal{V}}=\sqrt{\mathcal{V}^{T}\mathcal{W}^{T}\mathcal{W}\mathcal{V}}=\sqrt{\mathcal{V}^{T}\mathcal{V}}.
	\end{align*}
	
	Next, we show that the maximum singular value $\sigma_{\text{max}}(\mathcal{U}^{T}\tilde{\mathcal{V}})\leqslant1$. Consider $\mathcal{Z}\in H^{m}(m\geqslant n)$ with $\mathcal{Z}^{T}\mathcal{Z}=I$ and $\operatorname{span}(\mathcal{Z})=\operatorname{span}(\mathcal{U})+\operatorname{span}(\tilde{\mathcal{V}})$. We observe that there exist $Q_{1}, Q_{2}\in\mathbb{R}^{m\times n}$ with $Q_{1}^{T}Q_{1}=Q_{2}^{T}Q_{2}=I$ satisfying
	\begin{align*}
		\mathcal{U}=\mathcal{Z}Q_{1}, \quad \tilde{\mathcal{V}}=\mathcal{Z}Q_{2},
	\end{align*}
	and
	\begin{align}\label{ine:less2}
		\sigma_{\text{max}}(\mathcal{U}^{T}\tilde{\mathcal{V}})=\sigma_{\text{max}}((\mathcal{Z}Q_{1})^{T}\mathcal{Z}Q_{2})=\sigma_{\text{max}}(Q_{1}^{T}Q_{2})\leqslant\sigma_{\text{max}}(Q_{1})\sigma_{\text{max}}(Q_{2})\leqslant1.
	\end{align}
	
	It follows from \cref{eq:f_polar_dec} and \cref{ine:less2} that 
	\begin{align*}
		\Vert \mathcal{U}-\mathcal{V}\Vert^{2}=&\Vert \mathcal{U}\Vert^{2}+\Vert \mathcal{V}\Vert^{2}-2\operatorname{tr}(\mathcal{U}^{T}\mathcal{V})\geqslant\Vert \tilde{\mathcal{V}}\Vert^{2}+\Vert \mathcal{V}\Vert^{2}-2\sum_{i=1}^{n}\sigma_{i}(\mathcal{U}^{T}\mathcal{V})\\=&\Vert \tilde{\mathcal{V}}\Vert^{2}+\Vert \mathcal{V}\Vert^{2}-2\sum_{i=1}^{n}\sigma_{i}(\mathcal{U}^{T}\tilde{\mathcal{V}}\tilde{\mathcal{V}}^{T}\mathcal{V})\\\geqslant&\Vert \tilde{\mathcal{V}}\Vert^{2}+\Vert \mathcal{V}\Vert^{2}-2\sum_{i=1}^{n}\sigma_{\text{max}}(\mathcal{U}^{T}\tilde{\mathcal{V}})\sigma_{i}(P)\\\geqslant&\Vert \tilde{\mathcal{V}}\Vert^{2}+\Vert \mathcal{V}\Vert^{2}-2\operatorname{tr}(\tilde{\mathcal{V}}^{T}\mathcal{V})=\Vert \tilde{\mathcal{V}}-\mathcal{V}\Vert^{2}.
	\end{align*}
	Hence, we have 
	\begin{align*}
		\Vert \tilde{\mathcal{V}}-\mathcal{V}\Vert\leqslant \Vert \mathcal{U}-\mathcal{V}\Vert,
	\end{align*}
	which together with \cref{ine:vec_mar} yields that for $\tilde{\mathcal{V}}=(\tilde{v}_{1}, \tilde{v}_{2}, \ldots, \tilde{v}_{n})$,
	\begin{align*}
		\Vert \tilde{v}_{j}-v_{j}\Vert\leqslant\Vert \tilde{\mathcal{V}}-\mathcal{V}\Vert\leqslant\Vert \mathcal{U}-\mathcal{V}\Vert\leqslant\sqrt{n}\delta,\quad j=1,2,\ldots,n.
	\end{align*}
\end{proof}
\subsection{Proof of \cref{thm:fram_keep_dim}}\label{sec:p_fram_keep_dim}
\begin{proof}
	Let $\{x_{ij}\}_{j=1}^{d_{i}}$ be an orthonormal basis of $X_{i}(i=1,\ldots, p)$ with $a(x_{ij}, x_{kl})=\delta_{ik}\delta_{jl}$. We  obtain from \cref{prop:subspace_angle} that there exists an orthonormal basis $\{y_{ij}\}_{j=1}^{d_{i}}$ of $Y_{i} (i=1,\ldots,q)$ satisfying that for $(1,1)\leqslant(i,j)\leqslant(q,d_{q})$,
	\begin{align}\label{n_intera_keep_dim}
		\operatorname{dist}(x_{ij}, y_{ij})\leqslant \max_{i=1,\ldots,q}(1+\sqrt{d_{i}})\sqrt{2-2\sqrt{1-\varepsilon^{2}}}:=\tilde{\varepsilon}.
	\end{align}
	Obviously, $\varepsilon<\min_{1\leqslant i\leqslant q}\sqrt{\frac{4(1+\sqrt{d_{i}})^{2}N-1}{4(1+\sqrt{d_{i}})^{4}N^{2}}}$ implies that $\tilde{\varepsilon}<\frac{1}{\sqrt{N}}$. 
	Set $\{\beta_{rt}^{(ij)}\}$ such that $ y_{ij}=\sum_{r=1}^{p}\sum_{t=1}^{d_{r}}\beta_{rt}^{(i j)}x_{rt}$ for $ (1,1)\leqslant(i,j)\leqslant(q,d_{q})$ and we have that
	\begin{align*}
		\left(y_{11}, 
		\cdots, y_{1d_{1}}, \cdots, y_{q1}, \cdots, y_{qd_{q}}\right)=\left(x_{11}, \cdots, x_{1d_{1}}, \cdots,  x_{pd_{p}}\right)B_{1},
	\end{align*}
	where
	\begin{align*}
		B_{1}=\begin{pmatrix}\beta_{11}^{(11)}&\cdots&\beta_{11}^{(1d_{1})}&\cdots&\beta_{11}^{(q1)}&\cdots&\beta_{11}^{(qd_{q})}\\\vdots&\ddots&\vdots&\ddots&\vdots&\ddots&\vdots&\\\beta_{1d_{1}}^{(11)}&\cdots&\beta_{1d_{1}}^{(1d_{1})}&\cdots&\beta_{1d_{1}}^{(q1)}&\cdots&\beta_{1d_{1}}^{(qd_{q})}\\\vdots&\ddots&\vdots&\ddots&\vdots&\ddots&\vdots&\\\beta_{p1}^{(11)}&\cdots&\beta_{p1}^{(1d_{1})}&\cdots&\beta_{p1}^{(q1)}&\cdots&\beta_{p1}^{(qd_{q})}\\\vdots&\ddots&\vdots&\ddots&\vdots&\ddots&\vdots&\\\beta_{pd_{p}}^{(11)}&\cdots&\beta_{pd_{p}}^{(1d_{1})}&\cdots&\beta_{pd_{p}}^{(q1)}&\cdots&\beta_{pd_{p}}^{(qd_{q})}\end{pmatrix}:= \begin{pmatrix}B_{2}\\\star\end{pmatrix},
	\end{align*}
	with 
	\begin{align*}
		B_{2}=\begin{pmatrix}\beta_{11}^{(11)}&\cdots&\beta_{11}^{(qd_{q})}\\\vdots&\ddots&\vdots\\\beta_{qd_{q}}^{(11)}&\cdots&\beta_{qd_{q}}^{(qd_{q})}\end{pmatrix}.
	\end{align*}
	
	We assert that $B_{2}$ is strictly diagonally dominant. In fact, we obtain from \cref{n_intera_keep_dim} that 
	\begin{align*}
		\tilde{\varepsilon}\geqslant\operatorname{dist}(x_{ij}, y_{ij})=\operatorname{dist}(y_{ij}, x_{ij})=\frac{\left\Vert\sum_{r=1}^{p}\sum_{t=1}^{d_{r}}\beta_{rt}^{(i j)}x_{rt}-\beta_{ij}^{(i j)}x_{ij}\right\Vert}{\left\Vert\sum_{r=1}^{p}\sum_{t=1}^{d_{r}}\beta_{rt}^{(i j)}x_{rt}\right\Vert}.
	\end{align*}
	Note that
	\begin{align*}
		\left(\beta_{ij}^{(ij)}\right)^{2}\geqslant\left(1-\tilde{\varepsilon}^{2}\right)\sum_{r=1}^{p}\sum_{t=1}^{d_{r}}\left(\beta_{rt}^{(ij)}\right)^{2},\quad j=1,\ldots,d_{i}
	\end{align*}
	implies
	\begin{align*}
		\left|\beta_{ij}^{(ij)}\right|&\geqslant\sqrt{\left(\frac{1}{\tilde{\varepsilon}^{2}}-1\right)\sum_{(r,t)\neq(i,j)}\left(\beta_{rt}^{(ij)}\right)^{2}}>\sqrt{(N-1)\sum_{(r,t)\neq(i,j)}\left(\beta_{rt}^{(ij)}\right)^{2}}\\
		&\geqslant\sum_{(r,t)\neq(i,j)}\left|\beta_{rt}^{(ij)}\right|, \quad (1,1)\leqslant(i,j)\leqslant(q,d_{q}).
	\end{align*}
	It follows from the Gershgorin circle theorem that
	\begin{align*}
		\left|\lambda-\beta_{ij}^{(ij)}\right|\leqslant\sum_{(r, t)\neq(i, j)}\left|\beta_{rt}^{(ij)}\right|,\quad \forall \lambda\in \sigma(B_{2}).
	\end{align*}
	Consequently, we have
	\begin{align*}
		|\lambda|\geqslant\left|\beta_{ij}^{(ij)}\right|-\sum_{(r, t)\neq(i, j)}\left|\beta_{rt}^{(ij)}\right|>0, \quad (1,1)\leqslant(i,j)\leqslant(q,d_{q}),
	\end{align*}
	and 
	$\operatorname{rank}(B_{2})=N$. Therefore, $\operatorname{rank}(B_{1})=N$, which completes the proof.
\end{proof}
\subsection{Proof of \cref{prop:conv_fram}}\label{prof:conv_fram}
\begin{proof}
	Since $\operatorname{dist}(M_{h}(\lambda_{i}), U_{n+1/2}^{(i)})\ll1 (i=1,\ldots,q)$, 
	we obtain from \cref{thm:fram_keep_dim} that 
	\begin{align*}
		U_{n+1}=\bigoplus_{i=1}^{q}U_{n+1/2}^{(i)}. 
	\end{align*}
	
	For $\psi\in \bigoplus_{i=1}^{q}M_{h}(\lambda_{i})$ with $\Vert\psi\Vert=1$ and the orthonormal basis $\{v^{h}_{ij}\}_{j=1}^{d_{i}}$ of $M_{h}(\lambda_{i})$ with $a(v^{h}_{ij}, v^{h}_{kl})=\delta_{ik}\delta_{jl}$, there exists $\{\alpha_{ij}\}$ satisfying $\sum_{i=1}^{q}\sum_{j=1}^{d_{i}}\alpha_{ij}^{2}=1$ and $\psi=\sum_{i=1}^{q}\sum_{j=1}^{d_{i}}\alpha_{ij}v^{h}_{ij}.$  It holds that 
	\begin{equation}\label{ine:piece_cluster}
		\begin{aligned}
			&\operatorname{dist}\left(\psi, U_{n+1}\right)=\left\Vert\left(\operatorname{I}-\mathcal{P}_{U_{n+1}}\right)\sum_{i=1}^{q}\sum_{j=1}^{d_{i}}\alpha_{ij}v^{h}_{ij}\right\Vert\\\leqslant&\sum_{i=1}^{q}\sum_{j=1}^{d_{i}}|\alpha_{ij}|\left\Vert\left(\operatorname{I}-\mathcal{P}_{U_{n+1}}\right)v^{h}_{ij}\right\Vert\leqslant\sqrt{\sum_{i=1}^{q}\sum_{j=1}^{d_{i}}\left\Vert\left(\operatorname{I}-\mathcal{P}_{U_{n+1}}\right)v^{h}_{ij}\right\Vert^{2}}\\=&\sqrt{\sum_{i=1}^{q}\sum_{j=1}^{d_{i}}\operatorname{dist}^{2}\left(v^{h}_{ij}, U_{n+1}\right)}\leqslant\sqrt{\sum_{i=1}^{q}\sum_{j=1}^{d_{i}}\operatorname{dist}^{2}\left(M_{h}(\lambda_{i}), U_{n+1}\right)}\\\leqslant&\sqrt{\sum_{i=1}^{q}\sum_{j=1}^{d_{i}}\operatorname{dist}^{2}\left(M_{h}(\lambda_{i}), U_{n+1/2}^{(i)}\right)}\leqslant\sqrt{N}\max_{1\leqslant i\leqslant q}\operatorname{dist}(M_{h}(\lambda_{i}), U_{n+1/2}^{(i)}),
		\end{aligned} 
	\end{equation}
	and
	\begin{align}\label{ine:piece_to_cluster}
		\operatorname{dist}\left(\bigoplus_{i=1}^{q}M_{h}(\lambda_{i}), U_{n+1}\right)\leqslant\sqrt{N}\max_{1\leqslant i\leqslant q}\operatorname{dist}(M_{h}(\lambda_{i}), U_{n+1/2}^{(i)})\ll1.
	\end{align}
	
	It is clear that $U_{n+1}$ is a finite dimensional subspace of $V^{h}$.  We apply  \cref{thm:cluster_eigen} to \cref{eq:pro_ei_pro} and obtain that $\lambda^{(n+1)}_{ij}\leqslant\lambda_{i+1,1}^{h}$ due to $\operatorname{dist}\left(\bigoplus_{i=1}^{q}M_{h}(\lambda_{i}), U_{n+1}\right)\ll1$ for $(1,1)\leqslant(i, j)\leqslant(q, d_{q})$. 
	Hence, it follows from \cref{ine:piece_to_cluster}, \cref{thm:cluster_eigen} and \cref{prop:clus_to_vec} applied to \cref{eq:pro_ei_pro} that 
	\begin{align*}
		|\lambda_{ij}^{(n+1)}-\lambda^{h}_{ij}|\leqslant&\lambda_{i+1,1}^{h} \operatorname{dist}^{2}\left(\bigoplus_{i=1}^{q}M_{h}(\lambda_{i}), U_{n+1}\right)\leqslant\lambda_{i+1,1}^{h}\sqrt{N}\max_{1\leqslant i\leqslant q}\operatorname{dist}^{2}(M_{h}(\lambda_{i}), U_{n+1/2}^{(i)}),\\
		\operatorname{dist}(u^{h,o}_{ij}, u_{ij}^{(n+1)})\leqslant& \tilde{C}_{**}\operatorname{dist}\left(\bigoplus_{i=1}^{q}M_{h}(\lambda_{i}), U_{n+1}\right)\leqslant \tilde{C}_{**}\sqrt{N}\max_{1\leqslant i\leqslant q}\operatorname{dist}(M_{h}(\lambda_{i}), U_{n+1/2}^{(i)}),
	\end{align*}
	where the constant $\tilde{C}_{**}$ is independent of $U_{n+1/2}$ and may depend on $V^{h}$ (see \cref{thm:cluster_eigenfunc} and \cref{rem:chosen}).
\end{proof}
\subsection{Proof of \cref{thm:multi_keep_dim}}\label{sec:p_multi_keep_dim}
\begin{proof}
	For the $n$-th iteration, consider the linear operators
	\begin{align*}
		\mathcal{F}_{n}^{(i)}:U_{n}^{(i)}\rightarrow U_{n+1/2}^{(i)},\quad i=1,2,\ldots,q,
	\end{align*}
	and for $u^{(n)}\in U_{n}^{(i)}$, $u^{(n+1/2)}=\mathcal{F}_{n}^{(i)}u^{(n)}$ satisfying 
	\begin{align*}
		a\left(u^{(n+1/2)},v\right)-\bar{\lambda}_{i}b\left(u^{(n+1/2)},v\right)=\bar{\lambda}_{i}b\left(u^{(n)},v\right),\quad\forall v\in V^{h}.
	\end{align*}
	
	We claim that $\mathcal{F}_{n}^{(i)}$ is an injection.  Indeed, $u^{(n+1/2)}=v^{(n+1/2)}$ implies that
	\begin{align*}
		\bar{\lambda}_{i}b\left(u^{(n)},v\right)&= a\left(u^{(n+1/2)},v\right)-\bar{\lambda}_{i}b\left(u^{(n+1/2)},v\right)\\
		&=a\left(v^{(n+1/2)},v\right)-\bar{\lambda}_{i}b\left(v^{(n+1/2)},v\right)=\bar{\lambda}_{i}b\left(v^{(n)},v\right),\quad v\in V^{h},
	\end{align*}
	and $u^{(n)}=v^{(n)}$. 
	
	Since $U_{n}^{(i)}$ and $U_{n+1/2}^{(i)}$ are finite dimensional, $\mathcal{F}_{n}^{(i)}$ is indeed an isomorphism and we arrive at
	\begin{align*}
		\operatorname{dim}\left(U_{n+1/2}^{(i)}\right)=\operatorname{dim}\left(U_{n}^{(i)}\right),\quad i=1,2,\ldots,q.
	\end{align*}
\end{proof}
\subsection{Proof of \cref{thm:n_shifted_multi}}\label{sec:p_n_shifted_multi}
 \begin{proof}
	Let us consider $n=0$ first. 
	
	Since $\{u^{h}_{ij}\}_{j=1}^{d_{i}}$ is the orthonormal basis of $M_{h}(\lambda_{i}) (i=1, 2, \ldots, p)$ with $b(u^{h}_{ij}, u^{h}_{kl})=\delta_{ik}\delta_{jl}$, for $v_{i}^{(0)}\in U_{0}^{(i)}$, there exists $\left\{\alpha_{rt}^{(i)}\right\}$ such that 
	\begin{align}\label{eq:par_rep}
		v_{i}^{(0)}=\sum_{r=1}^{p}\sum_{t=1}^{d_{r}}\alpha_{rt}^{(i)}u^{h}_{rt}, \quad i=1, 2, \ldots, q. 
	\end{align}
	
	A simple calculation and the equation
	\begin{align*}
		a(\mathcal{P}_{M_{h}(\lambda_{i})}v_{i}^{(0)}, v)=a(v_{i}^{(0)}, v), \quad v\in M_{h}(\lambda_{i})
	\end{align*}
	show that
	\begin{align*}
		\mathcal{P}_{M_{h}(\lambda_{i})}v_{i}^{(0)}=\sum_{t=1}^{d_{i}}\alpha_{it}^{(i)}u^{h}_{it}.
	\end{align*}
	
	We obtain from \cref{ine:dist_exchange} and \cref{ine:init_theta_multi_keep_dim_1} that there exists $\varepsilon_{0}\in(0,1)$ such that
	\begin{align*}
		\varepsilon_{0}\geqslant&\operatorname{dist}\left(U_{0}^{(i)}, M_{h}(\lambda_{i})\right)\geqslant\operatorname{dist}\left(v_{i}^{(0)}, M_{h}(\lambda_{i}) \right)\\=&\operatorname{dist}\left(v_{i}^{(0)}, \mathcal{P}_{M_{h}(\lambda_{i})}v_{i}^{(0)} \right)=\frac{\left\Vert \sum_{r=1}^{p}\sum_{t=1}^{d_{r}}\alpha_{rt}^{(i)}u^{h}_{rt}-\sum_{t=1}^{d_{i}}\alpha_{it}^{(i)}u^{h}_{it}\right\Vert}{\left\Vert\sum_{r=1}^{p}\sum_{t=1}^{d_{r}}\alpha_{rt}^{(i)}u^{h}_{rt}\right\Vert},
	\end{align*}
	which yields
	\begin{align}\label{ine:main_part_theta_multi}
		\sum_{t=1}^{d_{i}}\lambda^{h}_{it}\left(\alpha_{it}^{(i)}\right)^{2}\geqslant\left(\frac{1-\varepsilon^{2}_{0}}{\varepsilon_{0}^{2}}\right)\sum_{1\leqslant r\neq i\leqslant p}\sum_{t=1}^{d_{r}}\lambda^{h}_{rt}\left(\alpha_{rt}^{(i)}\right)^{2}, \quad i=1, 2, \ldots, q.
	\end{align}
	
	Let $v_{i}^{(1/2)}\in V^{h}$ satisfy
	\begin{align*}
		a\left(v_{i}^{(1/2)},v\right)-\bar{\lambda}_{i}b\left(v_{i}^{(1/2)},v\right)=\bar{\lambda}_{i}b\left(v_{i}^{(0)},v\right)\quad\forall v\in V^{h}.
	\end{align*}
	We may write $v_{i}^{(1/2)}=\sum_{r=1}^{p}\sum_{t=1}^{d_{r}}\beta_{rt}^{(i)}u^{h}_{rt}$ for $i=1, 2, \ldots, q$. Note that \cref{eq:fd_weak_form_leq}, \cref{eq:inv_pow} and  \cref{eq:par_rep} imply
	\begin{align*}
		&\bar{\lambda}_{i}\sum_{r=1}^{p}\sum_{t=1}^{d_{r}}\alpha_{rt}^{(i)}b(u^{h}_{rt}, v)=\bar{\lambda}_{i}b(v_{i}^{(0)}, v)=a\left(v_{i}^{(1/2)},v\right)-\bar{\lambda}_{i}b\left(v_{i}^{(1/2)},v\right)\\=&\sum_{r=1}^{p}\sum_{t=1}^{d_{r}}\left(\lambda^{h}_{rt}-\bar{\lambda}_{i}\right)\beta_{rt}^{(i)}b(u^{h}_{rt}, v),\quad v\in V^{h}.
	\end{align*}
	We have
	\begin{align*}
		\beta_{rt}^{(i)}=\frac{\bar{\lambda}_{i}}{\lambda^{h}_{rt}-\bar{\lambda}_{i}}\alpha_{rt}^{(i)}, \quad i=1, 2, \ldots, q,
	\end{align*}
	and hence, 
	$$v_{i}^{(1/2)}=\sum_{r=1}^{p}\sum_{t=1}^{d_{r}}\frac{\bar{\lambda}_{i}}{\lambda^{h}_{rt}-\bar{\lambda}_{i}}\alpha_{rt}^{(i)}u^{h}_{rt}, \quad  i=1, 2, \ldots, q. $$
	
	Note that \cref{ine:eig_guess_theta_multi} and \cref{ine:main_part_theta_multi} imply that
	\begin{align*}
		\left|\lambda^{h}_{rt}-\bar{\lambda}_{i}\right|\geqslant&\left|\lambda^{h}_{rt}-\lambda^{h}_{i1}\right|-\left|\lambda^{h}_{i1}-\bar{\lambda}_{i}\right|\geqslant g-\delta_{0}, \quad r\neq i,
	\end{align*}
	and
	\begin{align*}
		&\frac{\sum_{1\leqslant r\neq i\leqslant p}\sum_{t=1}^{d_{r}}\left(\frac{\bar{\lambda}_{i}}{\lambda^{h}_{rt}-\bar{\lambda}_{i}}\alpha_{rt}^{(i)}\right)^{2}\lambda^{h}_{rt}}{\sum_{t=1}^{d_{i}}\left(\frac{\bar{\lambda}_{i}}{\lambda^{h}_{it}-\bar{\lambda}_{i}}\alpha_{it}^{(i)}\right)^{2}\lambda^{h}_{it}}\\\leqslant&\left(\frac{\delta_{0}}{g-\delta_{0}}\right)^{2}\frac{\sum_{1\leqslant r\neq i\leqslant p}\sum_{t=1}^{d_{r}}\lambda^{h}_{rt}\left(\alpha_{rt}^{(i)}\right)^{2}}{\sum_{t=1}^{d_{i}}\lambda^{h}_{it}\left(\alpha_{it}^{(i)}\right)^{2}}\leqslant\left(\frac{\delta_{0}}{g-\delta_{0}}\right)^{2}\frac{\varepsilon_{0}^{2}}{1-\varepsilon_{0}^{2}}.
	\end{align*}
	
	We then get that 
	\begin{equation}
		\begin{aligned}\label{eq:new_ite_theta_multi}
			\operatorname{dist}\left(v_{i}^{(1/2)}, M_{h}(\lambda_{i}) \right)
			=&\frac{\left\Vert v_{i}^{(1/2)}-\mathcal{P}_{M_{h}(\lambda_{i})}v_{i}^{(1/2)}\right\Vert}{\left\Vert v_{i}^{(1/2)}\right\Vert}\\=&\sqrt{1-\frac{ \sum_{t=1}^{d_{i}}\left(\frac{\bar{\lambda}_{i}}{\lambda^{h}_{it}-\bar{\lambda}_{i}}\alpha_{it}^{(i)}\right)^{2}\lambda^{h}_{it}}{\sum_{r=1}^{p}\sum_{t=1}^{d_{r}}\left(\frac{\bar{\lambda}_{i}}{\lambda^{h}_{rt}-\bar{\lambda}_{i}}\alpha_{rt}^{(i)}\right)^{2}\lambda^{h}_{rt}}}\\\leqslant&\frac{\delta_{0}\varepsilon_{0}}{\sqrt{(g-\delta_{0})^{2}(1-\varepsilon_{0}^{2})+\delta_{0}^{2}\varepsilon^{2}_{0}}}:=\varepsilon_{1},\quad i=1,\ldots,q.
		\end{aligned}
	\end{equation}
	
	We see that $v_{i}^{(1)}=\mathcal{F}_{0}^{(i)}v_{i}^{(0)}$, where $\mathcal{F}_{0}^{(i)}$ is an isomorphism. Then we obtain from \cref{ine:eig_guess_theta_multi} and \cref{eq:new_ite_theta_multi} that $\varepsilon_{1}\leqslant\varepsilon_{0}<1$ and 
	\begin{align*}
		\operatorname{dist}\left(M_{h}(\lambda_{i}),  U_{1/2}^{(i)} \right)\leqslant\varepsilon_{1},\quad i=1,2,\ldots,q.
	\end{align*}
	
	Similarly, we have
	\begin{align*}
		\operatorname{dist}\left(M_{h}(\lambda_{i}), U_{n+1/2}^{(i)} \right)\leqslant\varepsilon_{n+1}=\frac{\delta_{0}\varepsilon_{n}}{\sqrt{(g-\delta_{0})^{2}(1-\varepsilon_{n}^{2})+\delta_{0}^{2}\varepsilon^{2}_{n}}},\quad \forall n\in\mathbb{N}.
	\end{align*}
	
	Thus, $\varepsilon_{n+1}\leqslant\varepsilon_{n},\forall n\in\mathbb{N}$ and 
	\begin{align*}
		\varepsilon_{n+1}=&\frac{\delta_{0}\varepsilon_{n}}{\sqrt{(g-\delta_{0})^{2}(1-\varepsilon_{n}^{2})+\delta_{0}^{2}\varepsilon^{2}_{n}}}\leqslant\left(\frac{\delta_{0}}{\sqrt{(g-\delta_{0})^{2}(1-\varepsilon_{n-1}^{2})+\delta_{0}^{2}\varepsilon^{2}_{n-1}}}\right)^{2}\varepsilon_{n-1}\\\leqslant&\cdots\leqslant\left(\frac{\delta_{0}}{\sqrt{(g-\delta_{0})^{2}(1-\varepsilon_{0}^{2})+\delta_{0}^{2}\varepsilon^{2}_{0}}}\right)^{n}\varepsilon_{0},
	\end{align*}
	which indicates that $\varepsilon_{n+1}$ decreases towards $0$ as $n\rightarrow\infty$. Moreover, there holds that
	\begin{align}\label{lim:depen}
		\lim_{n\rightarrow\infty}\frac{\varepsilon_{n+1}}{\varepsilon_{n}}=\frac{\delta_{0}}{g-\delta_{0}}.
	\end{align}
\end{proof}

\subsection{Proof of \cref{prop:clus_pou_n_shifted}}\label{sec:p_clus_pou_n_shifted}
 \begin{proof}Let us start by $n=0$.
	
	Since $\{u^{h,o,0}_{ij}\}_{i=1}^{d_{i}}$ is an orthonormal basis of $M_{h}(\lambda_{i})$, for $\varphi\in M_{h}(\lambda_{i})$ with $\Vert\varphi\Vert=1$, there exists $\{\alpha_j\}$ satisfying $\varphi=\sum_{j=1}^{d_{i}}\alpha_{j}u^{h,o,0}_{ij}$ and $\sum_{j=1}^{d_{i}}\alpha_{j}^{2}=1$. Note that
	\begin{align*}
		&\operatorname{dist}\left(\varphi, U_{0}^{(i)}\right)=\left\Vert\left(\operatorname{I}-\mathcal{P}_{U_{0}^{(i)}}\right)\varphi\right\Vert\leqslant\sum_{j=1}^{d_{i}}|\alpha_{j}|\left\Vert\left(\operatorname{I}-\mathcal{P}_{U_{0}^{(i)}}\right)u^{h,o,0}_{ij}\right\Vert\\=&\sum_{j=1}^{d_{i}}|\alpha_{j}|\operatorname{dist}\left(u^{h,o,0}_{ij}, U_{0}^{(i)}\right)\leqslant\sum_{j=1}^{d_{i}}|\alpha_{j}|\operatorname{dist}\left(u^{h,o,0}_{ij}, u_{ij}^{(0)}\right)\leqslant\sqrt{d_{i}}\varepsilon_{0}.
	\end{align*}
	We arrive at
	\begin{align}\label{ine:clu_pieces}
		\operatorname{dist}\left(M_{h}(\lambda_{i}), U_{0}^{(i)}\right)\leqslant\sqrt{d_{i}}\varepsilon_{0}\leqslant\sqrt{D}\varepsilon_{0},\quad i=1,\ldots,q.
	\end{align}
	
	For $(1,1)\leqslant(i,j)\leqslant(q,d_{q})$, we have from \cref{eq:eigensagf} that
	\begin{align*}
		\left|\lambda^{h}_{ij}-\bar{\lambda}_{i}\right|\leqslant\left|\lambda^{h}_{ij}-\mathcal{C}_{i}\left(\left\{\lambda^{h}_{ij}\right\}_{j=1}^{d_{i}}\right)\right|+\left|\mathcal{C}_{i}\left(\left\{\lambda^{h}_{ij}\right\}_{j=1}^{d_{i}}\right)-\bar{\lambda}_{i}\right|\leqslant\gamma+\zeta_{0}<\frac{g}{2}.
	\end{align*}
	Consider $U_{1/2}^{(i)}=\operatorname{span}\{u_{i1}^{(1/2)}, \ldots, u_{id_{i}}^{(1/2)}\}$ for $i=1,\ldots,q$. In accordance with \cref{thm:n_shifted_multi} and \cref{ine:clu_pieces}, there holds that
	\begin{align*}
		\operatorname{dist}\left(M_{h}(\lambda_{i}), U_{1/2}^{(i)}\right)\leqslant\frac{\sqrt{D}\left(\gamma+\zeta_{0}\right)\varepsilon_{0}}{\sqrt{(g-\gamma-\zeta_{0})^{2}(1-D\varepsilon_{0}^{2})+D\left(\gamma+\zeta_{0}\right)^{2}\varepsilon^{2}_{0}}},\quad i=1,\ldots,q.
	\end{align*}
	
	Since $\varepsilon_{0}$ is sufficiently small, we obtain from \cref{thm:fram_keep_dim} that $U_{1}=\bigoplus_{i=1}^{q}U_{1/2}^{(i)}$, which together with \cref{thm:multi_keep_dim} implies that
	\begin{align*}
		\operatorname{dim}(U_{1})=\sum_{i=1}^{q}\operatorname{dim}(U_{1/2}^{(i)})=\sum_{i=1}^{q}d_{i}=N.
	\end{align*}
	
	Due to \cref{ine:piece_to_cluster}, we have
	\begin{align*}
		\operatorname{dist}\left(\bigoplus_{i=1}^{q}M_{h}(\lambda_{i}), U_{1}\right)\leqslant\frac{\sqrt{DN}\left(\gamma+\zeta_{0}\right)\varepsilon_{0}}{\sqrt{(g-\gamma-\zeta_{0})^{2}(1-D\varepsilon_{0}^{2})+D\left(\gamma+\zeta_{0}\right)^{2}\varepsilon^{2}_{0}}}:=\xi_{1}.
	\end{align*}
	
	We apply \cref{prop:conv_fram} to \cref{eq:pou_n_shifted_sub} and obtain that there exists an orthonormal basis  $\{u^{h,o,1}_{ij}\}_{j=1}^{d_{i}}$ of $M_{h}(\lambda_{i})$  with $b(u^{h,o,1}_{ij}, u^{h,o,1}_{kl})=\delta_{ik}\delta_{jl}$  such that 
	\begin{align}\label{resu:the_first}
		\operatorname{dist}\left(u^{h,o,1}_{ij}, u_{ij}^{(1)}\right)\leqslant \tilde{C}_{**}\xi_{1}, \quad\lambda_{ij}^{(1)}-\lambda^{h}_{ij}\leqslant\lambda^{h}_{q+1,1}\xi_{1}^{2},\quad (1,1)\leqslant(i,j)\leqslant(q,d_{q}),
	\end{align}
	where $\tilde{C}_{**}$ is a constant that is independent of $U_{1}$, i.e., independent of the iteration. 
	
	Set $\varepsilon_{1}=\tilde{C}_{**}\xi_{1}$ and $\zeta_{1}=\lambda^{h}_{q+1,1}\xi_{1}^{2}$, we then have
	\begin{align*}
		\left|\mathcal{C}_{i}\left(\left\{\lambda^{h}_{ij}\right\}_{j=1}^{d_{i}}\right)-\bar{\lambda}_{i}^{(1)}\right|=\left|\mathcal{C}_{i}\left(\left\{\lambda^{h}_{ij}-\lambda_{ij}^{(1)}\right\}_{j=1}^{d_{i}}\right)\right|\leqslant\zeta_{1}.
	\end{align*}
	
	We obtain that $\varepsilon_{1}\leqslant\varepsilon_{0}$ and $\zeta_{1}\leqslant\zeta_{0}$ since $\varepsilon_{0}, \gamma$ and $\zeta_{0}$ are sufficiently small. Similarly, there hold that
	\begin{align*}
		\operatorname{dist}\left(\bigoplus_{i=1}^{q}M_{h}(\lambda_{i}), U_{n+1}\right)\leqslant&\xi_{n+1}=\frac{\sqrt{DN}\left(\gamma+\zeta_{n}\right)\varepsilon_{n}}{\sqrt{(g-\gamma-\zeta_{n})^{2}(1-D\varepsilon_{n}^{2})+D\left(\gamma+\zeta_{n}\right)^{2}\varepsilon^{2}_{n}}},\\
		0\leqslant\lambda_{ij}^{(n+1)}-\lambda^{h}_{ij}\leqslant&\zeta_{n+1}=\lambda^{h}_{q+1,1}\xi_{n+1}^{2},\quad (1,1)\leqslant(i,j)\leqslant(q,d_{q}).
	\end{align*}
	
	Therefore there exists an orthonormal basis $\{u^{h,o,n+1}_{ij}\}_{j=1}^{d_{i}}$ of $M_{h}(\lambda_{i})$ such that 
	\begin{align*}
		\operatorname{dist}\left(u^{h, o,n+1}_{ij}, u_{ij}^{(n+1)}\right)\leqslant\varepsilon_{n+1}=& \tilde{C}_{**}\xi_{n+1}=\frac{\tilde{C}_{**}\sqrt{DN}\left(\gamma+\zeta_{n}\right)\varepsilon_{n}}{\sqrt{(g-\gamma-\zeta_{n})^{2}(1-D\varepsilon_{n}^{2})+D\left(\gamma+\zeta_{n}\right)^{2}\varepsilon^{2}_{n}}},
	\end{align*}
	for $(1,1)\leqslant(i,j)\leqslant(q,d_{q})$, where the constant $\tilde{C}_{**}$ is the same as the one in \cref{resu:the_first} due to the independence of iterations.
	
	We see that $\varepsilon_{n+1}\leqslant\varepsilon_{n}$ and $\zeta_{n+1}\leqslant\zeta_{n}$, and both $\varepsilon_n$ and $\zeta_{n}$ decrease towards $0$ as $n\rightarrow\infty$. Finally, we arrive at
	\begin{align*}
		\lim_{n\rightarrow\infty}\frac{\varepsilon_{n+1}}{\varepsilon_{n}}=\frac{\tilde{C}_{**}\sqrt{DN}\gamma}{g-\gamma}.
	\end{align*}
\end{proof}

\bibliographystyle{siamplain}
\bibliography{references}

\begin{thebibliography}{10}

\bibitem{quantum}
{\em Quantum {ESPRESSO}}.
\newblock http://www.quantum-espresso.org/, 2024.

\bibitem{adams2003sobolev}
{\sc R.~A. Adams and J.~J. Fournier}, {\em Sobolev Spaces}, Elsevier, 2003.

\bibitem{arbenz2012lecture}
{\sc P.~Arbenz, D.~Kressner, and D.~Z{\"u}rich}, {\em Lecture Notes on Solving
  Large Scale Eigenvalue Problems}, D-MATH, EHT Zurich, 2012.

\bibitem{axelsson1996iterative}
{\sc O.~Axelsson}, {\em Iterative Solution Methods}, Cambridge University
  Press, 1996.

\bibitem{babuvska1989finite}
{\sc I.~Babu{\v{s}}ka and J.~E. Osborn}, {\em Finite element-{G}alerkin
  approximation of the eigenvalues and eigenvectors of selfadjoint problems},
  Math. Comput., 52 (1989), pp.~275--297.

\bibitem{bai2006shift}
{\sc Z.~Bai, J.~Yin, and Y.~Su}, {\em A shift-splitting preconditioner for
  non-{H}ermitian positive definite matrices}, J. Comput. Math., 24 (2006),
  pp.~539--552.

\bibitem{bai2002regularized}
{\sc Z.~Bai and S.~Zhang}, {\em A regularized conjugate gradient method for
  symmetric positive definite system of linear equations}, J. Comput. Math., 20
  (2002), pp.~437--448.

\bibitem{chatelin2011spectral}
{\sc F.~Chatelin}, {\em Spectral Approximation of Linear Operators}, SIAM,
  2011.

\bibitem{dai2011finite}
{\sc X.~Dai, X.~Gong, Z.~Yang, D.~Zhang, and A.~Zhou}, {\em Finite volume
  discretizations for eigenvalue problems with applications to electronic
  structure calculations}, Multiscale Model. Simul., 9 (2011), pp.~208--240.

\bibitem{dai2014parallel}
{\sc X.~Dai, X.~Gong, A.~Zhou, and J.~Zhu}, {\em A parallel orbital-updating
  approach for electronic structure calculations}, arXiv:1405.0260,  (2014).

\bibitem{dai2015convergence}
{\sc X.~Dai, L.~He, and A.~Zhou}, {\em Convergence and quasi-optimal complexity
  of adaptive finite element computations for multiple eigenvalues}, IMA J.
  Numer. Anal., 35 (2015), pp.~1934--1977.

\bibitem{dai2021parallel}
{\sc X.~Dai, Z.~Liu, X.~Zhang, and A.~Zhou}, {\em A parallel orbital-updating
  based optimization method for electronic structure calculations}, J. Comput.
  Phys., 445 (2021), p.~110622.

\bibitem{dai2008convergence}
{\sc X.~Dai, J.~Xu, and A.~Zhou}, {\em Convergence and optimal complexity of
  adaptive finite element eigenvalue computations}, Numer. Math., 110 (2008),
  pp.~313--355.

\bibitem{d2018optimization}
{\sc E.~G. D'yakonov}, {\em Optimization in Solving Elliptic Problems}, CRC
  Press, 2018.

\bibitem{hazra2024predicting}
{\sc S.~Hazra, U.~Patil, and S.~Sanvito}, {\em Predicting the one-particle
  density matrix with machine learning}, J. Chem. Theory Comput., 20 (2024),
  pp.~4569--4578.

\bibitem{kato2013perturbation}
{\sc T.~Kato}, {\em Perturbation Theory for Linear Operators}, vol.~132,
  Springer Science \& Business Media, 2013.

\bibitem{kaxiras2003atomic}
{\sc E.~Kaxiras}, {\em Atomic and Electronic Structure of Solids}, Cambridge
  University Press, London, 2003.

\bibitem{knyazev1985sharp}
{\sc A.~Knyazev}, {\em Sharp a priori error estimates of the {R}ayleigh-{R}itz
  method without assumptions of fixed sign or compactness}, Math. Notes Acad.
  Sci. USSR, 38 (1985), pp.~998--1002.

\bibitem{knyazev2006new}
{\sc A.~V. Knyazev and J.~E. Osborn}, {\em New a priori {FEM} error estimates
  for eigenvalues}, SIAM J. Numer. Anal., 43 (2006), pp.~2647--2667.

\bibitem{kohn1965self}
{\sc W.~Kohn and L.~J. Sham}, {\em Self-consistent equations including exchange
  and correlation effects}, Phys. Rev., 140 (1965), p.~A1133.

\bibitem{kresse1996efficient}
{\sc G.~Kresse and J.~Furthm{\"u}ller}, {\em Efficient iterative schemes for ab
  initio total-energy calculations using a plane-wave basis set}, Phys. Rev. B,
  54 (1996), p.~11169.

\bibitem{martin2020electronic}
{\sc R.~M. Martin}, {\em Electronic Structure: Basic Theory and Practical
  Methods}, Cambridge University Press, London, 2020.

\bibitem{oliveira2020cecam}
{\sc M.~J. Oliveira, N.~Papior, Y.~Pouillon, V.~Blum, E.~Artacho, D.~Caliste,
  F.~Corsetti, S.~De~Gironcoli, A.~M. Elena, A.~Garc{\'\i}a, et~al.}, {\em The
  cecam electronic structure library and the modular software development
  paradigm}, J. Chem. Phys., 153 (2020), p.~024117.

\bibitem{pan2017parallel}
{\sc Y.~Pan, X.~Dai, S.~de~Gironcoli, X.-G. Gong, G.-M. Rignanese, and
  A.~Zhou}, {\em A parallel orbital-updating based plane-wave basis method for
  electronic structure calculations}, J. Comput. Phys., 348 (2017),
  pp.~482--492.

\bibitem{parlett1998symmetric}
{\sc B.~N. Parlett}, {\em The Symmetric Eigenvalue Problem}, SIAM, 1998.

\bibitem{payne1992iterative}
{\sc M.~C. Payne, M.~P. Teter, D.~C. Allan, T.~Arias, and a.~J. Joannopoulos},
  {\em Iterative minimization techniques for ab initio total-energy
  calculations: molecular dynamics and conjugate gradients}, Rev. Mod. Phys.,
  64 (1992), p.~1045.

\bibitem{polizzi2009density}
{\sc E.~Polizzi}, {\em Density-matrix-based algorithm for solving eigenvalue
  problems}, Phys. Rev. B, 79 (2009), p.~115112.

\bibitem{riley1955solving}
{\sc J.~D. Riley}, {\em Solving systems of linear equations with a positive
  definite, symmetric, but possibly ill-conditioned matrix}, Math. Tables Other
  Aids Comput., 110 (1955), pp.~96--101.

\bibitem{rousseeuw1987silhouettes}
{\sc P.~J. Rousseeuw}, {\em Silhouettes: a graphical aid to the interpretation
  and validation of cluster analysis}, J. Comput. Appl. Math., 20 (1987),
  pp.~53--65.

\bibitem{saad2003iterative}
{\sc Y.~Saad}, {\em Iterative Methods for Sparse Linear Systems}, SIAM, 2003.

\bibitem{saad2016analysis}
{\sc Y.~Saad}, {\em Analysis of subspace iteration for eigenvalue problems with
  evolving matrices}, SIAM J. Matrix Anal. Appl., 37 (2016), pp.~103--122.

\bibitem{sakurai2003projection}
{\sc T.~Sakurai and H.~Sugiura}, {\em A projection method for generalized
  eigenvalue problems using numerical integration}, J. Comput. Appl. Math., 159
  (2003), pp.~119--128.

\bibitem{schwarz1978estimating}
{\sc G.~Schwarz}, {\em Estimating the dimension of a model}, Ann. Stat.,
  (1978), pp.~461--464.

\bibitem{xu2024subspace}
{\sc Z.~Xu and Z.~Sheng}, {\em Subspace method based on neural networks for
  solving the partial differential equation}, arXiv:2404.08223,  (2024).

\end{thebibliography}
\end{document}